\theoremstyle{plain}
\newtheorem{thm}{Theorem}[section]
\newtheorem{lem}[thm]{Lemma}
\newtheorem{cor}[thm]{Corollary}
\newtheorem{prop}[thm]{Proposition}
\theoremstyle{definition}
\newtheorem{defi}[thm]{Definition}
\theoremstyle{remark}
\newtheorem{rem}[thm]{Remark}
\DeclareMathOperator{\R}{\mathbb{R}}
\DeclareMathOperator{\C}{\mathbb{C}}
\DeclareMathOperator{\Ric}{Ric}
\DeclareMathOperator{\Rm}{\mbox{Rm}}
\DeclareMathOperator{\cop}{\mathfrak{Rm}}
\DeclareMathOperator{\pl}{\mbox{pl}}
\begin{document}

\title[The branching curves and their appl. to the  $4$-dimensional Ricci flow]{The branching curves and their application to the four dimensional Ricci flow}
\author{Ilias Tergiakidis}
\thanks{This work has been funded by the RTG 1493 in the Mathematisches Institut, Georg-August Universit\"at G\"ottingen and is part of my Ph.D. dissertation \cite{Tergiakidis}. I would like to thank my supervisor Victor Pidstrygach.}
\address{Department of Mathematics, Aristotle University of Thessaloniki, 54124 Thessaloniki, Greece}
\email{tergiakidis@math.auth.gr}
\begin{abstract}
We study the four dimensional Ricci flow with the help of local invariants. If $(M^4,g(t))$ is a solution to the Ricci flow and $x \in M^4$, we can associate to the point $x$ a one-parameter family of curves, which lie in the product of two projective lines. This allows us to reformulate the Cheeger-Gromov-Hamilton Compactness Theorem in the context of these curves. We use this result, in order to study Type I singularities in dimension four and give a characterization of the corresponding singularity models.
\end{abstract}
\maketitle

\section{Introduction}

Let $(M^n,g_0)$ be a smooth Riemannian manifold. Hamilton's Ricci flow, introduced in \cite{Hamilton}, is a PDE that describes the evolution of the Riemannian metric tensor:
\begin{eqnarray*}
\frac{\partial}{\partial t}g(t) &=& -2 \mbox{Ric}_{g(t)}\\
g(0) &= & g_0,
\end{eqnarray*}
where $g(t)$ is a one-parameter family of metrics on $M^n$ and $\mbox{Ric}_{g(t)}$ denotes the Ricci curvature with respect to $g(t)$. The minus sign makes the Ricci flow a heat-type equation, so it is expected to "average out" the curvature.

A triple  $(M^n,g,f)$  is called a gradient shrinking Ricci soliton, if there exists a gradient vector field $X= \nabla^g f =\mbox{grad} f$ for some $f \in C^{\infty}(M)$ (called the potential function) and $\kappa>0$, such that
\begin{equation*}\label{eq: gradient Ricci soliton}
\Ric_g + \nabla^g \nabla^g f =  \kappa g.
\end{equation*}
Ricci solitons give rise to special solutions to the Ricci flow.  A gradient shrinking Ricci soliton satisfying the equation 
$$\mbox{Ric}_{g_0}+ \nabla^{g_0}\nabla^{g_0} f_0=\kappa g_0$$
corresponds to the self similar solution
$$g(t)=(1-2 \kappa t) \phi(t)^{*} (g_0),$$
where $\phi(t)$ is the one-parameter family of diffeomorphisms generated by the one-parameter family of vector fields $X(t)=\frac{\nabla^{g_0} f_0}{1-2 \kappa t}$. 

For a gradient shrinking Ricci soliton it is always possible to rescale the metric by $2 \kappa$ and shift the function $f_0$ by the constant $-C_0$, so that the soliton equation becomes
$$\mbox{Ric}_{g_{0}}+ \nabla^{g_{0}}\nabla^{g_{0}} f_{0}=\frac{1}{2} g_{0.}$$
We call such a soliton a normalized gradient shrinking Ricci soliton. 

We say that the gradient shrinking Ricci soliton is complete if $(M^n,g_0)$ is complete and the vector field $\nabla^{g_0}f_0$ is complete. 

We next introduce the canonical form for the associated time-dependent version of a normalized gradient shrinking Ricci soliton.  Let $(M^n,g_0,f_0)$ be a complete normalized gradient shrinking Ricci soliton. Then there exists a solution $g(t)$ of the Ricci flow with $g(0)=g_0$, diffeomorphisms $\phi(t)$ with $\phi(0)=\mbox{id}_M$, functions $f(t)$ with $f(0)=f_0$ defined for all $t$ with $\sigma(t)=1-t>0,$ such that the following hold:
\begin{enumerate}[(i)]
\item $\phi(t):M^n \to M^n$ is the one-parameter family of diffeomorphisms generated by $X(t)=\frac{\nabla^{g_0} f_0}{1-t}$,
\item $g(t)=\sigma(t)\phi(t)^* (g_0)$ on $(-\infty,1)$,
\item $f(t)=f_0 \circ \phi(t)=\phi(t)^* (f_0)$.
\end{enumerate}
Futhermore,
\begin{eqnarray*}
\mbox{Ric}_{g(t)}+\nabla^{g(t)}\nabla^{g(t)} f(t) &=& \frac{1}{2(1-t)}g(t),\\
\frac{\partial}{\partial t}f(t)&=&|\nabla^{g(t)} f(t)|^{2}_{g(t)}. 
\end{eqnarray*}

The classification of $3$-dimensional gradient shrinking Ricci solitons was done by the works of Perelman \cite{Per2}, Naber \cite{Naber}, Ni-Wallach \cite{NW} and Cao-Chen-Zhu \cite{CCZ}. They showed that a $3$-dimensional gradient shrinking Ricci soliton is a quotient of either $\mathbb{S}^3$ or $\mathbb{R}^3$ or $\mathbb{S}^2 \times \mathbb{R}$. This means that the only noncompact nonflat $3$-dimensional gradient shrinking Ricci solitons are the round cylinder and its quotients.  In this paper we will focus on the $4$-dimensional gradient shrinking Ricci solitons. In dimension $4$ there is no full classification of the gradient shrinking Ricci solitons. There is some classification done under curvature assumptions by Ni-Wallach \cite{NW1} and Naber \cite{Naber}.

The Ricci flow is a type of nonlinear heat equation for the metric and it is expected, that it develops singularities. We will focus on finite time singularities and $T < \infty$ will denote the singular time.  Even more specifically, a complete solution $(M^n,g(t))$ to the Ricci flow defined on a finite time interval $[0,T)$, $T < \infty$ is called a Type I Ricci flow if there exists some constant $C>0$ such that
$$\sup_M |\Rm_{g(t)}|_{g(t)} \leq \frac{C}{T-t},$$
 for all $t \in [0,T)$. In such a case, we say that the solution $g(t)$ develops a Type I singularity at time $T$. The most well known examples of Type I singularities are the neckpinch singularity modelled on a shrinking cylinder and those modelled on flows starting at a positive Einstein metric or more general at a gradient shrinking Ricci soliton with bounded curvature. 

A  sequence of points and times $\{ (x_i,t_i) \}$  with $x_i \in M^n$ and $t_i \to T$ is called an essential blow up sequence if there exists a constant $c>0$ such that
$$|\Rm_{g(t_i)}|_{g(t_i)}(x_i) \geq \frac{c}{T-t_i}.$$
A point $x\in M^n$ in a Type I Ricci flow is called a Type I singular point if there exists an essential blow up sequence with $x_i \to x$ on $M^n$. The set of all Type I singular points is denoted by $\Sigma_I$. 

In order to study finite time singularities one should take parabolic rescalings of the solutions about sequences of points and times, where the time tends to the singularity time $T$. The limit solutions of such sequences, if they exist, are ancient solutions and are called singularity models. Hamilton conjectured in \cite{Hamilton2}, that a suitable blow up sequence for a Type I singularity converges to a nontrivial gradient shrinking Ricci soliton. Sesum \cite{Sesum} confirmed the conjecture in the case where the blow up limit is compact. In the general case, blow up to a gradient shrinking soliton was proved by Naber \cite{Naber}. However, it remained an open question whether the limit soliton Naber constructed is nontrivial. Enders, M\"uller and Topping eliminated this possibility in \cite{EMT}.

Understanding the formation and the nature of singularities is a very crucial step. This can be done by classifying the set of singularitiy models that may arrise. We focus on dimension four. M\'aximo showed in \cite{Maximo}, that in dimension four, the singularity models for finite singularities can have Ricci curvature of mixed sign. Thus the only restriction remaining is the nonnegativity of the scalar curvature. This is unfortunately not the best scenario, because this condition is too week, in order to obtain a full classification result for the singularity models in dimension four. The experts believe, that the best alternative would be to classify the generic or at least the stable singularity models. A singularity model developing certain original data is labeled stable, if flows starting from all sufficient small perturbations of that data develop singularities with the same singularity model. Furthermore, a singularity model is labeled generic, if flows that start from an open dense subset of all possible initial data develop singularities having the same singularity model. Clearly, a singularity model can be generic only if it is stable. More details can be found in \cite{IKS}. Furthermore, it is conjectured by experts, that the only candidates for generic singularity models in dimension four are $\mathbb{S}^4$, $\mathbb{S}^3 \times \mathbb{R}$, $\mathbb{S}^2 \times \mathbb{R}^2$. These singularity models are known to be generic. There is another soliton, which is not known yet to be generic or not. This the  $(\mathcal{L}_{-1}^{2},h)$, which is the blow down soliton constructed by Feldman, Ilmanen and Knopf in \cite{FIK}. If the blow down soliton is generic, then it should be also in the list above.

In this paper we try to contribute in the direction of understanding the $4$-dimensional gradient shrinking Ricci solitons, which can appear as singularity models for Type I singularities. This is done by considering local invariants for a $4$-dimensional Riemannian manifold and trying to interpret the limiting solitons in the language of these local invariants. Let's be more precise.

In Section 2 we describe a construction of A. N. Tyurin. Tyurin showed in \cite{Tyurin}, that for any $4$-dimensional Riemannian manifold $(M^4,g)$ and fixed point $x \in M$, one can define in a natural way three quadratic forms in  $\Lambda^2 T_xM$. These are given by the exterior power evaluated at a volume form, the second exterior power of the Riemannian metric $g$ and the curvature tensor of the Riemannian connection respectively. After complexifying, their projectivization defines three quadrics in $\mathbb{P}(\Lambda^2 T_xM \otimes \mathbb{C})$. For any point $x\in M$ at which the quadratic forms are linearly independent, the intersection of these three quadrics defines a singular $K3$ surface. After performing a resolution of the singular points, the resolved $K3$ is a double branched cover of  a smooth quadric in  $\mathbb{P}(T_xM \otimes \mathbb{C})$. In many cases the branching locus corresponds to a curve of bidegree $(4,4)$ in the product of two projective lines. The branching curve denoted by $\Gamma_x$ will be our local invariant for the $4$-dimensional manifold $M$. Its coefficients will be determined by the components of the Riemann curvature tensor. Note that four years later, V. V. Nikulin in \cite{Nikulin} extended the result to the case of pseudo-Riemannian manifolds with a Lorentz metric. 

In Section 3 we do some explicit calculations and compute examples of branching curves (local invariants) for some $4$-dimensional gradient shrinking Ricci solitons.

In Section 4 we prove the main result of this paper, namely Theorem  \ref{th: converge of curves}. It states, that in dimension four, convergence of marked solutions to the Ricci flow (convergence in the Cheeger-Gromov sence) implies convergence for branching curves. The proof can be found in Section 4.

\begin{thm} \label{th: converge of curves}
Let $\{ (M^4, g_i(t),x, F_i(t)) \}_{i \in \mathbb{N}}$, $t \in (\alpha, \omega) \ni 0$ be a sequence of smooth, complete, marked solutions to the Ricci flow, where the time-dependet frame $F_i(t)$ evolves to stay orthonormal. Assume, that the sequence  converges to a complete marked solution to the Ricci flow $$(M_{\infty}^{4}, g_{\infty}(t),x_{\infty}, F_{\infty}(t)), \, t \in (\alpha, \omega) \mbox{ as } i \to \infty,$$
where $F_{\infty}(t)$ evolves to stay orthonormal as well. Let $\{ \Gamma_{x}^{g_i(t)} \}_{i \in \mathbb{N}}$ be the sequence of one-parameter families of branching curves associated to $x \in M$ and  $\Gamma_{x_{\infty}}^{g_\infty (t)}$ the one-parameter family of branching curves associated to $x_{\infty} \in M_{\infty}$ (if this exists). Then $\Gamma_{x}^{g_i(t)}$ converges to $\Gamma_{x_{\infty}}^{g_\infty (t)}$ as $i \to \infty$, in the sense that the coefficients of the curves converge.
\end{thm}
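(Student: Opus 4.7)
The plan is to reduce the statement to convergence of the Riemann curvature components in an orthonormal frame at the marked point, and then to invoke the polynomial dependence of the branching curve coefficients on these components, as constructed in Section 2.

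First, I would unpack Cheeger-Gromov-Hamilton convergence of marked solutions. It provides an exhaustion $\{U_i\}_{i \in \mathbb{N}}$ of $M_\infty^4$ by precompact open sets containing $x_\infty$, together with smooth embeddings $\phi_i : U_i \to M^4$ satisfying $\phi_i(x_\infty) = x$, $d\phi_i|_{x_\infty}(F_\infty(t)) = F_i(t)$, and $\phi_i^* g_i(t) \to g_\infty(t)$ in $C^\infty_{\mathrm{loc}}$ uniformly on compact subintervals of $(\alpha,\omega)$. Pulling back via $\phi_i$, the three quadratic forms on $\Lambda^2 T_x M^4$ used to build $\Gamma_x^{g_i(t)}$ are identified with three forms on the fixed space $\Lambda^2 T_{x_\infty} M_\infty^4$, expressed in the single orthonormal frame $F_\infty(t)$.

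Next, I would read off from the Section 2 construction that the curve coefficients are polynomial in the entries of the three quadratic forms. In the orthonormal frame $F_\infty(t)$ the metric and volume pieces reduce to constants determined by $\delta_{ab}$, while the curvature piece has entries $R_{abcd}$. Complexifying, intersecting three quadrics in $\mathbb{P}(\Lambda^2 T_{x_\infty} M_\infty^4 \otimes \mathbb{C})$, resolving the resulting singular $K3$, and realizing it as a double cover of a smooth quadric in $\mathbb{P}(T_{x_\infty} M_\infty^4 \otimes \mathbb{C})$ are all algebraic operations performed relative to the fixed complexified basis. The branching locus, hence the curve $\Gamma_{x_\infty}^{g_\infty(t)} \subset \mathbb{P}^1 \times \mathbb{P}^1$ of bidegree $(4,4)$, therefore has coefficients which are universal polynomials in the $R_{abcd}$, independent of the index $i$.

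Finally, $C^\infty_{\mathrm{loc}}$ convergence of the metrics implies smooth convergence of all derivatives at $x_\infty$, so the components of $\phi_i^*\Rm_{g_i(t)}$ in $F_\infty(t)$ converge to those of $\Rm_{g_\infty(t)}$, uniformly on compact subintervals of $(\alpha,\omega)$. The condition $d\phi_i(F_\infty(t)) = F_i(t)$, together with $\phi_i$ being an isometry of $\phi_i^* g_i(t)$ and $g_i(t)$, ensures that the components of $\Rm_{g_i(t)}$ in $F_i(t)$ coincide with those of $\phi_i^*\Rm_{g_i(t)}$ in $F_\infty(t)$. Combined with the polynomial dependence above, this yields coefficient-wise convergence of $\Gamma_x^{g_i(t)}$ to $\Gamma_{x_\infty}^{g_\infty(t)}$, uniformly on compact subintervals of $(\alpha,\omega)$.

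The main obstacle is the polynomial dependence asserted in the second step: one has to trace carefully through the complexification, the intersection of the three quadrics, the resolution of the singular $K3$, and the projection onto the double branched cover in order to confirm that, relative to the fixed complexified orthonormal basis, the final $(4,4)$-coefficients emerge as explicit polynomial functions of the $R_{abcd}$. Once this algebraic fact is in place — it is essentially verified by the explicit computations of Sections 2 and 3 — the analytic content of the theorem is reduced to the well-known smooth convergence of curvature tensors under Cheeger-Gromov-Hamilton convergence, which is automatic.
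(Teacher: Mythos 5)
Your proposal is correct and follows essentially the same route as the paper: the paper first proves (Lemma \ref{lem: convergence curvature}) that Cheeger--Gromov convergence of the metrics, with the frame compatibility $(\phi_i)_*F_\infty(t)=F_i(t)$, yields convergence of the Riemann curvature tensors, and then concludes by observing that the coefficients of the branching curve are polynomials in the components of $\Rm$ and that polynomials are continuous. Your write-up simply inlines the content of that lemma and makes the polynomial-dependence step more explicit, but the argument is the same.
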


We use the previous Theorem and combine it with the result of Enders, M\"uller and Topping \cite{EMT} in mentioned above, in order to obtain a characterization of the gradient shrinking Ricci solitons, which can appear as singularity models for Type I singularities in dimension four. We call this result Corollary \ref{cor: ilias}. The proof of this Corollary can be found in Section 4.

\begin{cor}\label{cor: ilias}
Let $(M^4,g(t))$ be a Type I Ricci flow on $[0,T)$ and $x \in \Sigma_I$. Furthermore let  $ \Gamma_{x}^{g(t)}$ be the one-paramater family of branching curves associated to  $x$. Let us choose a sequence of scaling factors $\lambda_i$, such that $\lambda_i \to 0$. We define the rescaled Ricci flows $(M^4,g_i(t),x,F_i(t))$ by
$$g_i(t)=\lambda_{i}^{-1}g(T+\lambda_i t),\, t \in [-\lambda_{i}^{-1}T,0),$$
where  the time-dependet frame $F_i(t)$ evolves to stay orthonormal. Then the one-parameter family of curves $\Gamma_{x}^{g_i(t)}$ is $\Gamma_{x}^{g(T+\lambda_i t)}$ and subconverges to the one-parameter family of curves $\Gamma_{x_{\infty}}^{g_{\infty}(t)}$ (if this exists) of a nontrivial normalized gradient shrinking Ricci soliton $(M_{\infty}^{n}, g_{\infty}(t),x_{\infty}, F_{\infty}(t))$, $t \in (-\infty,0) $ in canonical form, where $F_{\infty}(t)$ evolves to stay orthonormal. 
\end{cor}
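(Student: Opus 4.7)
The plan is to build the corollary from two inputs: the scale invariance of Tyurin's construction under parabolic rescaling, and the Enders--M\"uller--Topping singularity theorem, after which Theorem \ref{th: converge of curves} yields the conclusion directly. To start, I would verify the identity $\Gamma_x^{g_i(t)}=\Gamma_x^{g(T+\lambda_i t)}$: under the rescaling $g\rightsquigarrow \lambda^{-1}g$, each of the three quadratic forms on $\Lambda^2 T_xM$ from Section 2 is multiplied by a nonzero power of $\lambda^{-1}$ (namely $\lambda^{-2}$, $\lambda^{-2}$, and $\lambda^{-1}$ for the volume form, for $g\wedge g$, and for the curvature form, respectively). Since a projective quadric is insensitive to multiplication of its defining quadratic form by a nonzero scalar, all three quadrics in $\mathbb{P}(\Lambda^2 T_xM\otimes\mathbb{C})$, their intersection, the resolution of this singular $K3$, and the $(4,4)$ branching locus of the resulting double cover coincide for $g_i(t)$ and for $g(T+\lambda_i t)$. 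This gives the asserted equality of one-parameter families of curves.

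\textbf{Step 2: Subconvergence of the rescaled flows.} Since $x\in\Sigma_I$, the Enders--M\"uller--Topping theorem \cite{EMT} applies: along some subsequence, the marked Ricci flows $(M^4, g_i(t), x, F_i(t))$ converge in the Cheeger--Gromov--Hamilton sense on $(-\infty,0)$ to a complete, nontrivial, normalized gradient shrinking Ricci soliton $(M_\infty^n, g_\infty(t), x_\infty, F_\infty(t))$ in canonical form. The frames $F_i(t)$ and $F_\infty(t)$ can be arranged to evolve so as to stay orthonormal, placing us in exactly the setting required by Theorem \ref{th: converge of curves}.

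\textbf{Step 3: Application of Theorem \ref{th: converge of curves}.} Invoking Theorem \ref{th: converge of curves} on this convergent subsequence then yields coefficient-wise subconvergence of $\Gamma_x^{g_i(t)}$ to $\Gamma_{x_\infty}^{g_\infty(t)}$ on each compact subinterval of $(-\infty,0)$, which is precisely the conclusion of the corollary.

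The step requiring the most care is the nondegeneracy issue flagged by the parenthetical ``if this exists'' in the statement: the limit branching curve is defined only when the three quadratic forms at $x_\infty$ remain linearly independent, and this is a genuine open condition on the Riemann tensor of the limiting soliton rather than an automatic consequence of the construction. Modulo this caveat, most of the technical work has already been absorbed into Theorem \ref{th: converge of curves}, and the corollary reduces to a short assembly of scale invariance, the Enders--M\"uller--Topping theorem, and that theorem.
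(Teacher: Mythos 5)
Your proposal is correct and follows essentially the same route as the paper: scale invariance of the branching curve (the paper's Lemma \ref{lem: scale invariance}, which you reprove directly at the level of the three quadratic forms), then Cheeger--Gromov--Hamilton compactness combined with the Enders--M\"uller--Topping identification of the limit as a nontrivial normalized gradient shrinking soliton in canonical form, and finally Theorem \ref{th: converge of curves} to pass to the curves. Your explicit scaling exponents and your remark on the ``if this exists'' caveat are consistent with the paper's treatment.
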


\section{A local invariant of a four dimensional Riemannian manifold}

\subsection{The geometry of three quadrics in $\mathbb{P}(\Lambda^2 T_x M \otimes \C)$}

Let $(M,g)$ be a four dimensional Riemannian manifold. We denote by $T_x M$ the tangent space at the point $x \in M$. We are going to define three quadratics forms on $\Lambda^2 T_{x} M$.

\centerline{\textbf{The quadratic form $v_x$ }:}
We define the map  
\begin{eqnarray*}
\Lambda^2 T_xM \times \Lambda^2 T_xM  &\to & \Lambda^4 T_x M \\
(u,h) &\mapsto & u \wedge h.
\end{eqnarray*}
Recall, that the volume form $\mbox{vol}_M$ on $M$ is a nowhere vanishing section of $\Lambda^4 T_{x}^{*}M$. We identify $ \Lambda^4 T_x M$ with $\R$ by evaluating $u \wedge h$ on the volume form, i.e. $\mbox{vol}_M( u \wedge h)$. So we obtain a bilinear form $v_x: \Lambda^2 T_xM \times \Lambda^2 T_xM \to \R$. 

Let now $\{ x^i \}_{i=1}^{4}$ denote local coordinates around $x$, such that $\{\frac{\partial}{\partial x^i}\}_{i=1}^{4}$ is a basis for $T_x M$ and $\{dx^ i\}_{i=1}^{4}$ is the dual to it. Then $\{\frac{\partial}{\partial x^i} \wedge \frac{\partial}{\partial x^j} \}_{1\leq i <j \leq 4}$ and $\{dx^i \wedge dx^j\}_{1\leq i <j \leq 4}$, are bases for $\Lambda^2 T_x M$ and $(\Lambda^2 T_x M)^* \simeq \Lambda^2 T_{x}^{*} M$ respectively. Let $u,h \in \Lambda^2 T_x M$ be given by 
\begin{equation}\label{eq: u}
u=\underset{1 \leq i < j \leq 4}\sum u^{ij} \frac{\partial}{\partial x^i} \wedge \frac{\partial}{\partial x^j}
\end{equation}
and 
\begin{equation}\label{eq: h}
h=\underset{1 \leq i < j \leq 4}\sum h^{ij} \frac{\partial}{\partial x^i} \wedge \frac{\partial}{\partial x^j}
\end{equation}
with respect to this basis. Then
\begin{eqnarray*}\label{eq: map}
\Lambda^2 T_xM \times \Lambda^2 T_xM  &\to & \Lambda^4 T_x M  \nonumber \\
(u,h) &\mapsto & (u^{12}h^{34}-u^{13}h^{24}+u^{14}h^{23}\\ \nonumber
& & +u^{23} h^{14}- u^{24}h^{13}+u^{34}h^{12}) \frac{\partial}{\partial x^1} \wedge \frac{\partial}{\partial x^2} \wedge \frac{\partial}{\partial x^3} \wedge \frac{\partial}{\partial x^4}.
\end{eqnarray*}
Recall, that the Riemannian volume form is given by  $\sqrt{|\det(g)|} dx_1 \wedge dx_2 \wedge dx_3 \wedge dx_4$.
Then, the bilinear form $v_x$ is given by
\begin{eqnarray*}
v_x: \Lambda^2 T_xM \times \Lambda^2 T_xM &\to & \R\\
(u,h) & \mapsto &\sqrt{|\det(g)|} (u^{12}h^{34}-u^{13}h^{24}+u^{14}h^{23}+\\
&&+u^{23} h^{14}- u^{24}h^{13}+u^{34}h^{12}).
\end{eqnarray*}
The associated quadratic form $v_{x}: \Lambda^2 T_x M \rightarrow \R$ is now given by 
\begin{equation}\label{eq: quadratic form for volume form}
v_x(u)=2\sqrt{|\det(g)|} (u^{12}u^{34}-u^{13}u^{24}+u^{14}u^{23}).
\end{equation}

\centerline{\textbf{The quadratic form $\Lambda^2g_x$ }:}
We need at this point the notion of the Kulkarni-Nomizu product. This product is defined for two symmetric $(2,0)$-tensors and gives as a result a $(4,0)$-tensor. Specifically, if $k$ and $l$ are symmetric $(2,0)$-tensors, then the product is defined by
\begin{eqnarray*}
(k \varowedge l) (u_1,u_2,u_3,u_4)&:=&k(u_1,u_3)l(u_2,u_4)+k(u_2,u_4)l(u_1,u_3)\\
&&-k(u_1,u_4)l(u_2,u_3)-k(u_2,u_3)l(u_1,u_4).
\end{eqnarray*}

Consider now the Riemannian metric $g_x$ and let $u=u_1 \wedge u_2$ and $h=h_1 \wedge h_2$. We define a symmetric bilinear form $\Lambda^2 g_x$ on $\Lambda^2 T_x M$ by defining it on totally decomposable vectors as  follows
\begin{eqnarray*}
\Lambda^2 g_x : \Lambda^2 T_x M \times \Lambda^2 T_x M &\to & \mathbb{R}\\
(u,h)  &\mapsto & \frac{1}{2} (g_x \varowedge g_x)(u_1,u_2,h_1,h_2)\\
& &=g_x(u_1,h_1)g_x(u_2,h_2)-g_x(u_1,h_2)g_x(u_2,h_1).
\end{eqnarray*}
and extending it bilinearly to a bilinear form on the whole $\Lambda^2 T_x M$. 

For $u$ and $h$ like in (\ref{eq: u}) and (\ref{eq: h}) we obtain, that  in components
\begin{equation*}
\Lambda^2 g_x (\frac{\partial}{\partial x^i} \wedge \frac{\partial}{\partial x^j} , \frac{\partial}{\partial x^k} \wedge \frac{\partial}{\partial x^l})=\det 
\begin{bmatrix}
g_{ik} & g_{jk}\\
g_{il} & g_{jl}
\end{bmatrix}
=\frac{1}{2}(g_x \varowedge g_x)_{ijkl}.
\end{equation*}
So we obtain a quadratic form
\begin{equation}\label{eq: quadratic form for Lambda^2 metric}
\Lambda^2 g_x(u)=\frac{1}{2} \underset{1 \leq i,k <j,l \leq 4}\sum  (g_x \varowedge g_x)_{ijkl} u^{ij} u^{kl}.
\end{equation}

\centerline{\textbf{The quadratic form $\mbox{R}_x$ }:}
Let now $\mbox{Rm}_x$ denote the $(4,0)$-Riemann curvature tensor at $x\in M$. We define a symmetric bilinear form $\mbox{R}_x$ on $\Lambda^2 T_x M$ by defining it on totally decomposable vectors as  follows
\begin{eqnarray*}
\mbox{R}_x: \Lambda^2 T_x M \times \Lambda^2 T_x M &\rightarrow  & \R \\
(u_1 \wedge u_2,h_1 \wedge h_2)& \mapsto & \mbox{Rm}_x(u_1,u_2,h_2,h_1).
\end{eqnarray*}
and extending it bilinearly to a bilinear form on the whole $\Lambda^2 T_x M$. 

In the basis  $\{\frac{\partial}{\partial x^i} \wedge \frac{\partial}{\partial x^j} \}_{1\leq i <j \leq 4}$ we obtain,
\begin{eqnarray*}
\mbox{R}_x: \Lambda^2 T_x M \times \Lambda^2 T_x M &\rightarrow  & \R \\
(\frac{\partial}{\partial x^i} \wedge \frac{\partial}{\partial x^j},\frac{\partial}{\partial x^k} \wedge \frac{\partial}{\partial x^l}) & \mapsto & 
R_{(ij)(kl)}=R_{ijlk},
\end{eqnarray*}
where $R_{ijlk}=\mbox{Rm}(\frac{\partial}{\partial x^i},\frac{\partial}{\partial x^j},\frac{\partial}{\partial x^l},\frac{\partial}{\partial x^k})$. Notice the convention $R_{(ij)(kl)}=R_{ijlk}$, which is used in the whole article.
The associated quadratic form is given by
\begin{equation}\label{quadratic form for curvature}
\mbox{R}_x(u)=\underset{1 \leq i,k <j,l \leq 4}\sum R_{ijlk} u^{ij} u^{kl}.
\end{equation}

From now on vector spaces are turned into complexified ones. The quadratic forms (\ref{eq: quadratic form for volume form}), (\ref{eq: quadratic form for Lambda^2 metric}) and (\ref{quadratic form for curvature}) define three quadrics in $\mathbb{P}(\Lambda^2 T_x M \otimes \C) \cong \mathbb{P}^5$, given by
\begin{equation}\label{eq: quadric of volume form}
\mathbb{P}(v_x)=\{[u] \in \mathbb{P}(\Lambda^2 T_x M \otimes \C): \,  u^{12}u^{34}-u^{13}u^{24}+u^{14}u^{23}=0 \},
\end{equation}
\begin{equation}\label{eq: quadric of Lambda^2 of metric}
\mathbb{P}(\Lambda^2 g_x)=\{[u] \in \mathbb{P}(\Lambda^2 T_x M \otimes \C): \, \frac{1}{2} \underset{1 \leq i,k <j,l \leq 4}\sum(g_x \varowedge g_x)_{ijkl} u^{ij} u^{kl} =0 \}
\end{equation}
and
\begin{equation}\label{eq: quadric of curvature}
\mathbb{P}(\mbox{R}_x)=\{[u] \in \mathbb{P}(\Lambda^2 T_x M \otimes \C): \, \underset{1 \leq i,k <j,l \leq 4}\sum R_{ijlk} u^{ij} u^{kl}=0 \}.
\end{equation}

We would like to take now a closer look at the Grassmannian $\mbox{Gr}_2(T_x M \otimes \C)$ of two-dimensional linear subspaces of $T_x M \otimes \C$. We prefer to look at it as the variety $\mbox{Gr}_1( \mathbb{P}(T_x M \otimes \C))$ of lines in $\mathbb{P}(T_x M \otimes \C)$, where $\mathbb{P}(T_x M \otimes \C) \cong \mathbb{P}^3$ . Let 
$$ w= \sum_{i=1}^{4} w^{i}\frac{\partial}{\partial x^i}$$
and
$$ \tilde{w}= \sum_{j=1}^{4} \tilde{w}^{j}\frac{\partial}{\partial x^j},$$
where $w,\tilde{w} \in T_x M \otimes \C$. Then $[w]=[w_{1},w_{2},w_{3},w_{4}]$ and $[\tilde{w}]=[\tilde{w}_{1},\tilde{w}_{2},\tilde{w}_{3},\tilde{w}_{4}]$
correspond to points in $\mathbb{P}(T_x M \otimes \C)$. Their projective span, denoted by $\mathbb{P}\mbox{-span}([w],[\tilde{w}])$ represents a line in $\mathbb{P}(T_x M \otimes \C)$.

Let $\mbox{pl}$ denote the Pl\" ucker embedding
\begin{eqnarray*}\label{eq: Pluecker embedding}
\mbox{pl}:\mbox{Gr}_1( \mathbb{P}(T_x M \otimes \C)) & \to & \mathbb{P}(\Lambda^2 (T_x M \otimes \C))\\
\mathbb{P}\mbox{-span}([w], [\tilde{w}]) & \mapsto & [w \wedge \tilde{w}]. \nonumber
\end{eqnarray*}
In other words, the Pl\"ucker embedding maps a line in $\mathbb{P}(T_x M \otimes \C)$ to a point in $\mathbb{P}(\Lambda^2 (T_x M \otimes \C))$. The coordinates of $[w \wedge \tilde{w}]$ in the basis $\{\frac{\partial}{\partial x^i} \wedge \frac{\partial}{\partial x^j} \}_{1\leq i <j \leq 4}$  are given by
\begin{equation*}
[w^1\tilde{w}^2- w^2 \tilde{w}^1, w^1 \tilde{w}^3 - w^3 \tilde{w}^1, w^1 \tilde{w}^4-w^4 \tilde{w}^1, w^2 \tilde{w}^3- w^3 \tilde{w}^2, w^2 \tilde{w}^4- w^4 \tilde{w}^2, w^3 \tilde{w}^4- w^4 \tilde{w}^3].
\end{equation*}
We will denote these coordinates by $[u^{12},u^{13},u^{14},u^{23},u^{24},u^{34}]$. Oberve that they correspond to the $2 \times 2$ minors of the matrix 
$$\begin{bmatrix}
w^1&&\tilde{w}^1\\
w^2&&\tilde{w}^2\\
w^3&&\tilde{w}^3\\
w^4&&\tilde{w}^4
\end{bmatrix}.$$

It is a well known fact, that $\mbox{Gr}_1(\mathbb{P}(T_x M \otimes \C))$ can be naturally realized as a quadric hypersurface in $\mathbb{P}(\Lambda^2 (T_x M \otimes \C))$. Recall that a vector $u \in \Lambda^2(T_x M \otimes \C)$ is called totally decomposable if there exist linear independent vectors $w, \tilde{w} \in T_x M \otimes \C$, such that $u=w \wedge \tilde{w}$. Observe that  
$$\mbox{pl}(\mbox{Gr}_1(\mathbb{P}(T_x M \otimes \C))=\{ [u] \in \mathbb{P}(\Lambda^2 (T_x M \otimes \C)): \, u \in \Lambda^2(T_x M \otimes \C) \mbox{ is totally decomposable} \}.$$ Furthermore, the vector $u \in  \Lambda^2(T_x M \otimes \C)$ is totally decomposable if and only if $u \wedge u=0$, in coordinates
$$u^{12}u^{34}-u^{13}u^{24}+u^{14}u^{23}=0.$$ 
By taking now into account (\ref{eq: quadric of volume form}), we observe that we can identify $\mbox{pl}(\mbox{Gr}_1( \mathbb{P}(T_x M \otimes \C)))$ with the quadric $\mathbb{P}(v_x)$.

\centerline{\textbf{The quadric surface $\mathbb{P}(g_x)$ }:}
Now the metric $g_x$ defines a quadratic form $T_x M \otimes \C  \rightarrow \C$ by
$$g_x(w)=\sum_{i,j=1}^{4} g_{ij} w^i w^j,$$
where $g_{ij}=g_{ji}$. It defines a quadric surface
$$\mathbb{P}(g_x)=\{ [w] \in \mathbb{P}(T_x M \otimes \C):\,  \sum_{i,j=1}^{4} g_{ij} w^i w^j=0 \}.$$ 
This quadric is non-degenerate, since the quadratic form $g_x$ is non-degenerate. So its rank equals four and it corresponds to a smooth quadric in $\mathbb{P}(T_x M \otimes \C)$. 

\begin{rem}
Recall, that if a quadric is mapped to a quadric under a projective trasformation, then the rank of the coefficient matrix is not changed. Thus one can classify quadrics in complex projective spaces up to their rank. Precisely, in $\mathbb{P}^3$ there are four of them: rank $4$ corresponds to a smooth quadric, rank $3$ to a quadric cone, rank $2$ to a pair of planes and rank $1$ to a double plane. The interested reader can look up page 33 of \cite{Harris}.
\end{rem} 

We need at this point some theory on spinor bundles. We will recall some facts on $spin$ and $spin^{\C}$ structures on $4$-manifolds. Heuristically, one can see $spin$ and $spin^{\C}$ structures as generalizations of orientantions. The tangent bundle $TM$ gives rise to a principal $O(4)$-bundle of frames denoted by $P_{O(4)}$. The manifold is said to be orientable if this bundle can be reduced to a $SO(4)$-bundle denoted by  $P_{SO(4)}$. We define the group $Spin(4)=SU(2)\times SU(2)$ to be the double cover of $SO(4)$. This is the universal cover. If we make a further reduction, we obtain a principal $Spin(4)$-bundle denoted by $P_{Spin(4)}$. We have then, that the map
$$\xi: P_{Spin(4)} \to P_{SO(4)}$$
is a double covering and say that the manifold is $spin$. To find the complex analogue we replace $SO(4)$ by the group $SO(4)\times \mathbb{S}^1$ and consider its double cover. We define the group 
$$Spin^{\C}(4)=(Spin(4) \times \mathbb{S}^1)/\{\pm 1\}=Spin(4) \times_{\mathbb{Z}_2} \mathbb{S}^1.$$
This is the desired double cover of $SO(4)\times \mathbb{S}^1$. Finally we define $M$ to be $spin^{\C}$, if given the bundle  $P_{SO(4)}$, there are principal bundles $P_{\mathbb{S}^1}$ and $P_{Spin^{\C}(4)}$, with a $Spin^{\C}(4)$ equivariant bundle map, a double cover
$$\xi^{\prime}:P_{Spin^{\C}(4)} \to P_{SO(4)} \times P_{\mathbb{S}^1}.$$
It is a known fact, that in dimension four any orientable manifold has a (non-unique) $spin^{\C}$ structure. The $spin^{\C}$ representation now allows us to consider the associated vector bundle $S$, called the spinor bundle for a given $spin^{\C}$ structure. This is a complex vector bundle. In the four-dimensional case this vector bundle splits into the sum of two subbundles $S^+$, $S^-$, such that
$$S=S^+ \oplus S^-.$$

Let $\mathbb{P}(S^{+}_{x})\cong \mathbb{P}^1$ and $\mathbb{P}(S^{-}_{x})\cong \mathbb{P}^1$ denote the projectivizations of the fibers of the spinor bundles $S^+$  and $S^-$ over $x$ respectively. Consider now the Segre embedding
\begin{eqnarray*}
\mathbb{P}(S^{-}_{x}) \times \mathbb{P}(S^{+}_{x}) &\to & \mathbb{P}(S^{-}_{x} \otimes S^{+}_{x})\\
\left[ \rho^{-} \right] \times\left[ \rho^{+} \right] &\mapsto &\left[ \rho^{-} \otimes \rho^{+} \right].
\end{eqnarray*}
One can show, that $S^{-}_{x} \otimes S^{+}_{x}  \cong T_x M \otimes \C$.

Let now $\{ e_i \}_{i=1}^{4}$ be a local orthonormal frame for $T_xM \otimes \C$. We will be working with this frame from now on, because it is more convient for computational reasons. The Segre embedding with respect to the basis $\{ e_i \}_{i=1}^{4}$ is given by
\begin{eqnarray}\label{eq: segre}
\sigma:\mathbb{P}(S^{-}_{x}) \times \mathbb{P}(S^{+}_{x}) &\rightarrow & \mathbb{P}(T_x M \otimes \C) \nonumber\\ 
([a^1,a^2],[b^1,b^2]) &\mapsto & [a^1b^1+a^2b^2,i(a^2b^2-a^1b^1),-i(a^1b^2+a^2b^1),a^2b^1-a^1b^2]  \nonumber\\
&&=:[w^1,w^2,w^3,w^4].  
\end{eqnarray}
This is a well defined map. In order to pick coordinates on $\mathbb{P}(S^{-}_{x}) $ and $\mathbb{P}(S^{+}_{x}) $ one should observe the projection of $\xi^{\prime}$ onto the first factor:
$$P_{Spin^{\C}(4)} \to P_{SO(4)}.$$
A point in the fiber of $P_{SO(4)}$ over $x$ is a basis for $T_x M$ and a point in the fiber of $P_{Spin^{\C}(4)}$ over $x$ is a basis for the spinor $S_x= S^{+}_{x} \oplus S^{-}_{x}$.

\begin{rem}
Recall that the "classical" Segre embedding is given by
\begin{eqnarray*}
\Sigma:\mathbb{P}^{1} \times \mathbb{P}^{1} &\rightarrow & \mathbb{P}^3 \\
([a^1,a^2],[b^1,b^2]) &\mapsto & [a^1 b^1,a^2 b^2, a^1 b^2, a^2 b^1]=:[W^1,W^2,W^3,W^4].
\end{eqnarray*}
The image is just the quadric surface $W^1 W^2 - W^3 W^4 =0$ and the rank of the quadric is four, i.e. it's a smooth quadric.
The associated symmetric matrix is 
$$\Sigma=\begin{bmatrix}
0&1/2&0&0\\
1/2&0&0&0\\
0&0&0&-1/2\\
0&0&-1/2&0
\end{bmatrix}.$$
Let now 
$$B=\begin{bmatrix}
1&i&0&0\\
1&-i&0&0\\
0&0&i&-1\\
0&0&i&1
\end{bmatrix},$$
so that $B^t \Sigma B =I_4$. Then 
$$B^{-1}\begin{bmatrix}
W^1\\W^2\\W^3\\W^4
\end{bmatrix}=\begin{bmatrix}
w^1\\w^2\\w^3\\w^4
\end{bmatrix}.$$
\end{rem}

One we can easily observe that the image of the Segre embedding is just the quadric surface $(w^{1})^{2}+(w^{2})^{2}+(w^{3})^{2}+(w^{4})^{2}=0$ and the rank of the quadric is four, i.e. it s a smooth quadric. Thus $\mathbb{P}(g_x)$ can be written with respect to the orthonormal basis $\{e_i\}_{i=1}^{4}$ for $T_x M \otimes \C$ as
\begin{equation*} \label{eq: quadric of the metric}
\mathbb{P}(g_x)=\{[w] \in \mathbb{P}(T_x M \otimes \C):\, (w^{1})^{2}+(w^{2})^{2}+(w^{3})^{2}+(w^{4})^{2}=0 \}.
\end{equation*}

The quadric $\mathbb{P}(g_x)$ has two rulings by lines and a unique line of each ruling passes through each point of the quadric. More precisely:  fix a point $[a^1,a^2] \in \mathbb{P}(S_{x}^{-})$. Then 
$$t_+:=\sigma(\{ [a^1,a^2] \} \times \mathbb{P}(S^{+}_{x}))$$
is a line in $\mathbb{P}(T_x M \otimes \C)$. Similarly for fixed $[b^1,b^2] \in \mathbb{P}(S_{x}^{+})$, 
$$t_-:=\sigma(\mathbb{P}(S^{-}_{x}) \times \{[b^1,b^2]\})$$
is also  a line in $\mathbb{P}(T_x M \otimes \C)$.  So the quadric contains two families of lines denoted by  $\mathcal{F}_-$ and $\mathcal{F}_+$ respectively such that,
$$\mathcal{F}_-=\bigcup_{[b^1,b^2] \in \mathbb{P}(S^{+}_{x})}\{t_-\} ,\,\,\, \mathcal{F}_+=\bigcup_{[a^1,a^2] \in \mathbb{P}(S^{-}_{x}) } \{t_+\}.$$
If we choose any point of $t_+ $, we can find a unique line of the family $\mathcal{F}_-$ passing through it. Analogously for every point of $t_{-}$ we can find a unique line of $\mathcal{F}_+$ passing through it. Furthermore it holds that no two lines from the same family intersect and that any two lines belonging to different families intersect in a unique point of the quadric. 
The lines $\mathbb{P}(S_{x}^{\pm})$ are called the rectilinear generators of the quadric and 

\begin{equation*}\label{eq: P(g) generators}
\mathbb{P}(g_x) = \sigma( \mathbb{P}(S^{-}_{x})  \times  \mathbb{P}(S^{+}_{x})).
\end{equation*}

\centerline{\textbf{The Pl\"ucker Embedding}:}
Every $t_-$ or  $t_+$ is a line in $\mathbb{P}(T_x M \otimes \C)$. We will compute their images under the Pl\"ucker embedding. By setting first $[b^1,b^2]=[1,0]$ and then $ [b^1,b^2]=[0,1]$  in (\ref{eq: segre}) we can easily see, that
$$t_+=\mathbb{P}\mbox{-span}([a^1,-ia^1,-ia^2,a^2],[a^2,ia^2,-ia^1,-a^1]).$$
Thus we obtain, that the coordinates of $\mbox{pl}(t_+)$ in the basis $\{e_i \wedge e_j \}_{1\leq i<j\leq 4}$ of $\Lambda^2(T_x M \otimes \C)$ are
\begin{equation*}
[2ia^1 a^2,i\{(a^{2})^{2}-(a^{1})^{2}\},-(a^{1})^{2}-(a^{2})^{2},-(a^{1})^{2}-(a^{2})^{2},i\{(a^{1})^{2}-(a^{2})^{2}\},2ia^1 a^2].
\end{equation*}
On the other hand by setting first $[a^1,a^2]=[1,0]$ and then $ [a^1,a^2]=[0,1]$  in (\ref{eq: segre}), we have that
$$t_-=\mathbb{P}\mbox{-span}([b^1,-ib^1,-ib^2,-b^2],[b^2,ib^2,-ib^1,b^1]).$$
In this case the coordinates of $\mbox{pl}(t_-)$ in the basis $\{e_i \wedge e_j \}_{1\leq i<j\leq 4}$ of $\Lambda^2(T_x M \otimes \C)$ are
\begin{equation*}
[2ib^1 b^2,i\{(b^{2})^{2}-(b^{1})^{2}\},(b^{1})^{2}+(b^{2})^{2},-(b^{1})^{2}-(b^{2})^{2},i\{(b^{2})^{2}-(b^{1})^{2}\},-2ib^1 b^2].
\end{equation*}

It is well known, that in dimension four the Hodge $\ast$-operator induces a natural decomposition of $\Lambda^2 T_xM$ on an oriented manifold $M$ given by
$$\Lambda^2 T_xM  =\Lambda^{2}_{+} T_xM  \oplus \Lambda^{2}_{-} T_xM,$$
where $\Lambda^{2}_{+} T_xM$  and $\Lambda^{2}_{-} T_xM$ correspond to the eigenspaces $+1$ and $-1$ respectively. Furthermore elements of $\Lambda^{2}_{+} T_xM$  and $\Lambda^{2}_{-} T_xM$ are called self-dual and anti-self-dual respectively. We will perform a change of basis for $\Lambda^2 T_x M \otimes \C$. We would like to express the coordinates of $\pl(t_+)$ and $\pl(t_-)$ in the basis $\mathfrak{B}$ of $\Lambda^2(T_x M \otimes \C)$ given by
\begin{eqnarray*}
f_{1}^{\pm}=\frac{1}{\sqrt{2}}(e_1 \wedge e_2 \pm e_3 \wedge e_4)\\
f_{2}^{\pm}=\frac{1}{\sqrt{2}}(e_1 \wedge e_3 \mp e_2 \wedge e_4)\\
f_{3}^{\pm}=\frac{1}{\sqrt{2}}(e_1 \wedge e_4 \pm e_2 \wedge e_3).
\end{eqnarray*}
One can observe, that $\{f_{i}^{+}\}_{i=1}^{3}$ is a basis for $\Lambda^{2} _{+ }T_x M \otimes \C$, where $\ast f_{i}^{+}=f_i$, $i=1,2,3$
and that  $\{f_{i}^{-}\}_{i=1}^{3}$ is a basis for $\Lambda^{2} _{-}T_x M \otimes \C$, where $\ast f_{i}^{-}=-f_i$, $i=1,2,3$. By using the change of basis matrix
$$\begin{bmatrix}
\frac{\sqrt{2}}{2} & 0 &0 &0 &0 &\frac{\sqrt{2}}{2}  \\
0& \frac{\sqrt{2}}{2} & 0 &0 &- \frac{\sqrt{2}}{2} & 0 \\
0 & 0 & \frac{\sqrt{2}}{2}  &\frac{\sqrt{2}}{2} & 0 &0\\
\frac{\sqrt{2}}{2} &0&0 &0 &0& -\frac{\sqrt{2}}{2} \\
0& \frac{\sqrt{2}}{2} & 0 & 0 & \frac{\sqrt{2}}{2} & 0\\
0&0& \frac{\sqrt{2}}{2} & -\frac{\sqrt{2}}{2} &0&0
\end{bmatrix}$$
we compute, that the coordinates $[u^{12},u^{13},u^{14},u^{23},u^{24},u^{34}]$ in the basis $\mathfrak{B}$ of $\Lambda^2(T_x M \otimes \C)$ are given by
$$[u^1,u^2,u^3,u^4,u^5,u^6]:=[u^{12}+u^{34},u^{13}-u^{24},u^{14}+u^{23},u^{12}-u^{34},u^{13}+u^{24},u^{14}-u^{23}].$$
Thus the coordinates of $\mbox{pl}(t_+)$ in the basis $\mathfrak{B}$ of $\Lambda^2(T_x M \otimes \C)$ are
\begin{equation}\label{eq: homogeneous coordinates of t_+ in P^5}
[2 ia^1 a^2,i\{(a^{2})^{2}-(a^{1})^{2}\},-(a^{1})^{2}-(a^{2})^{2},0,0,0]
\end{equation}
and the coordinates of $\mbox{pl}(t_-)$ in the basis $\mathfrak{B}$ of $\Lambda^2(T_x M \otimes \C)$ are
\begin{equation}\label{eq: homogeneous coordinates of t_ - in P^5}
[0,0,0,2 ib^1 b^2,i\{(b^{2})^{2}-(b^{1})^{2}\},(b^{1})^{2}+(b^{2})^{2}].
\end{equation}

By (\ref{eq: homogeneous coordinates of t_+ in P^5}) and (\ref{eq: homogeneous coordinates of t_ - in P^5}) one can easily see, that $\mathcal{F}_+ $ and $\mathcal{F}_-$ are embedded conics in $\mathbb{P}(\Lambda^2 T_x M \otimes \C)$ given by the equations

\begin{equation}\label{eq:conic P(g_x)_+}
\begin{cases} u^{4}=u^{5}=u^{6}=0\\(u^{1})^{2}+(u^{2})^{2} +(u^{3})^{2}=0 \end{cases}
\end{equation}
and
\begin{equation}\label{eq:conic P(g_x_-)}
\begin{cases} u^{1}=u^{2}=u^{3}=0\\(u^{4})^2+(u^{5})^2+ (u^{6})^2=0 \end{cases}
\end{equation}
respectively. We will denote these conics by $\mathcal{C}_+$ and $\mathcal{C}_-$. Obviously each of the two conics is sitting in a plane in $\mathbb{P}(\Lambda^2 T_x M \otimes \C)$. The first plane is $\mathbb{P}(\Lambda^{2}_{+} T_x M \otimes \C)$ and the second is $\mathbb{P}(\Lambda^{2}_{-} T_x M \otimes \C)$. They are given by the equations

\begin{equation*}\label{eq:plane P_+}
u^{4}=u^{5}=u^{6}=0
\end{equation*}
and
\begin{equation*}\label{eq:plane P_-}
u^{1}=u^{2}=u^{3}=0
\end{equation*}
respectively. Obviously $\mathbb{P}(\Lambda^{2}_{+} T_x M \otimes \C) \cap \mathbb{P}(\Lambda^{2}_{-} T_x M \otimes \C)= \emptyset$. 

\centerline{\textbf{The projectivized tangent bundle}:}
Let now $\mathcal{T}:=T\mathbb{P}(g_x)$ denote the tangent bundle of the quadric $\mathbb{P}(g_x)$ and $\mathbb{P}(\mathcal{T})$ its projectivization. Then one can write
$$\mathbb{P}(\mathcal{T}) = \{ (t_+ \cap t_-,l):\, l \subset \mathbb{P}(T_x M \otimes \C)  \mbox{ is a line } \mbox{tangent to } \mathbb{P}(g_x) \mbox{ at the point } t_+ \cap t_-\} ,$$
which is an algebraic subvariety of $\mathbb{P}(g_x) \times \mbox{Gr}_1(\mathbb{P}(T_x M \otimes \C)) \subset \mathbb{P}(T_x M \otimes \C)   \times  \mbox{Gr}_1(\mathbb{P}(T_x M \otimes \C))$.
We will now  apply the Pl\"ucker embedding on the second factor. We define the map
$$\mbox{id}_{\mathbb{P}(g_x)} \times \mbox{pl}: \mathbb{P}(g_x) \times \mbox{Gr}_1 (\mathbb{P}(T_x M \otimes \C)) \to \mathbb{P}(g_x) \times \mbox{pl}(\mbox{Gr}_1 (\mathbb{P}(T_x M \otimes \C))). $$ 
Then
\begin{eqnarray*}
(\mbox{id}_{\mathbb{P}(g_x)} \times \mbox{pl})  \big( \mathbb{P}(\mathcal{T}) \big) &:=& \{ (t_+ \cap t_-,\pl(l)):\, l \subset \mathbb{P}(T_x M \otimes \C) \\
&&  \mbox{ is a line } \mbox{tangent to } \mathbb{P}(g_x) \mbox{ at the point } t_+ \cap t_-\}.
\end{eqnarray*}
Thus $(\mbox{id}_{\mathbb{P}(g_x)} \times \mbox{pl})  \big( \mathbb{P}(\mathcal{T}) \big)$ is naturally an algebraic subvariety
\begin{equation*}
(\mbox{id}_{\mathbb{P}(g_x)} \times \mbox{pl})  \big( \mathbb{P}(\mathcal{T}) \big) \subset \mathbb{P}(g_x) \times \mbox{pl}(\mbox{Gr}_1(\mathbb{P}(T_x M \otimes \C))) \subset \mathbb{P}(T_x M \otimes \C) \times \mathbb{P}(\Lambda^2 T_x M \otimes \C).
\end{equation*}
If we now denote by 
\begin{eqnarray*}
\pi: (\mbox{id}_{\mathbb{P}(g_x)} \times \mbox{pl})  \big( \mathbb{P}(\mathcal{T}) \big) &\to& \,\mbox{pl}(\mbox{G}r_1(\mathbb{P}(T_x M \otimes \C))) \\
(t_+ \cap t_- , \pl(l)) &\mapsto & \pl(l)
\end{eqnarray*}
and
\begin{eqnarray*}
\tau: (\mbox{id}_{\mathbb{P}(g_x)} \times \mbox{pl})  \big( \mathbb{P}(\mathcal{T}) \big) &\to& \mathbb{P}(g_x)  \\
(t_+ \cap t_- , \pl(l)) &\mapsto & t_+ \cap t_-
\end{eqnarray*}
the natural projections, we are interested in the geometry of $\pi \Big((\mbox{id}_{\mathbb{P}(g_x)} \times \mbox{pl})  \big( \mathbb{P}(\mathcal{T}) \big) \Big)$. We would like to give a description in $\mathbb{P}(\Lambda^2 T_x M \otimes \C)$ of the image of the set of lines tangent to the quadric $\mathbb{P}(g_x)$ at the point $t_+ \cap t_-$ under the Pl\"ucker embedding. All these lines lie on one plane and pass through one point, so in $\mathbb{P}(\Lambda^2 T_x M \otimes \C)$ they form a line given by  $\mathbb{P}\mbox{-span}(\pl(t_+),\pl(t_-))$. Thus
\begin{equation}\label{eq:pi(P(T))}
\pi \Big( (\mbox{id}_{\mathbb{P}(g_x)} \times \mbox{pl})  \big( \mathbb{P}(\mathcal{T}) \big)\Big)= \{ \pl(l) \in \mathbb{P}\mbox{-span}(\pl(t_+),\pl(t_-)): \,\pl(t_+) \in \mathcal{C}_+, \, \pl(t_-) \in\mathcal{C}_- \}
\end{equation}
and 
$$\mbox{dim}\Big[ \pi \Big( (\mbox{id}_{\mathbb{P}(g_x)} \times \mbox{pl})  \big( \mathbb{P}(\mathcal{T}) \big)\Big) \Big]=\mbox{dim}(\mathcal{C}_+)+\mbox{dim}(\mathcal{C}_-)+1=3,$$ 
because $\pi \Big( (\mbox{id}_{\mathbb{P}(g_x)} \times \mbox{pl})  \big( \mathbb{P}(\mathcal{T}) \big)\Big)$ is the join of the varieties $\mathcal{C}_+$ and $\mathcal{C}_-$. We can now describe $(\mbox{id}_{\mathbb{P}(g_x)} \times \mbox{pl})  \big( \mathbb{P}(\mathcal{T}) \big)$ by

\begin{equation*}\label{eq: P(T)}
(\mbox{id}_{\mathbb{P}(g_x)} \times \mbox{pl})  \big( \mathbb{P}(\mathcal{T}) \big)= \{( t_+ \cap t_-, \pl(l)) : \,  \pl(l) \in \mathbb{P}\mbox{-span}(\pl(t_+),\pl(t_-)) \}.
\end{equation*}

We are going to show now that the variety $\pi \Big( (\mbox{id}_{\mathbb{P}(g_x)} \times \mbox{pl})  \big( \mathbb{P}(\mathcal{T}) \big) \Big)$ is singular and we will determine its singular locus. By (\ref{eq:conic P(g_x)_+}), (\ref{eq:conic P(g_x_-)}) and (\ref{eq:pi(P(T))}) we see that the variety $\pi \Big( (\mbox{id}_{\mathbb{P}(g_x)} \times \mbox{pl})  \big( \mathbb{P}(\mathcal{T}) \big) \Big)$ is defined by the equations
\begin{equation}\label{eq:tangents to the quadric eta}
\begin{cases} (u^{1})^{2}+(u^{2})^{2} +(u^{3})^{2}=0\\ (u^{4})^{2}+(u^{5})^{2} +(u^{6})^{2}=0. \end{cases}
\end{equation}

The system of equations (\ref{eq:tangents to the quadric eta}) shows that the singular points of $\pi \Big( (\mbox{id}_{\mathbb{P}(g_x)} \times \mbox{pl})  \big( \mathbb{P}(\mathcal{T}) \big) \Big)$ are given by 
$$\mbox{Sing} \Big( \pi \Big( (\mbox{id}_{\mathbb{P}(g_x)} \times \mbox{pl})  \big( \mathbb{P}(\mathcal{T}) \big) \Big) \Big)=\mathcal{C}_+ \cup \mathcal{C}_-.$$
Let's explain why. We will fix a coordinate system on $\mathbb{P}\mbox{-span}(\pl(t_+),\pl(t_-))$. Let $T_+$ and $T_-$ denote the vector space representations of $\pl(t_+)$ and $\pl(t_-)$ in the basis $\mathcal{B}$ of $\Lambda^2(T_x M \otimes \C)$ respectively. We have then, that
$$\mbox{span}(T_+,T_-)=\{ \lambda T_+ + \mu T_-: \, \lambda,\mu \in \C \}$$
is a plane in $\Lambda^2 T_x M \otimes \C$. So we obtain a projective coordinate system on $\mathbb{P}\mbox{-span}(\pl(t_+),\pl(t_-))$. A point on this line has coordinates in the basis $\mathfrak{B}$ of $\Lambda^2(T_x M \otimes \C)$ given by
\begin{equation*}\label{eq: point on <t_+, t_->}
[2 \lambda  ia^1a^2,\lambda i\{(a^{2})^{2}-(a^{1})^{2}\},\lambda \{-(a^{1})^{2}-(a^{2})^{2}\},2\mu i b^1 b^2,\mu i\{(b^{2})^{2}-(b^{1})^{2}\},\mu\{(b^{1})^{2}+(b^{2})^{2}\}]
\end{equation*}
for scalars $\lambda$ and $\mu$. Obviously, by  (\ref{eq:tangents to the quadric eta}) the Jacobian matrix of the polynomials defining the variety is 

$$\begin{bmatrix}
2u^{1} & 2u^{2} & 2u^{3} & 0 & 0 &0 \\
0 & 0 & 0 & 2u^{4} & 2u^{5} & 2u^{6} \\
\end{bmatrix}$$
and its rank at the point $\pl(t_+)$ or $\pl(t_-)$ is equal to one, i.e. lower than on any other point of  $\mathbb{P}\mbox{-span}(\pl(t_+),\pl(t_-))$.

\subsection{The intersection of three quadrics}
Consider the intersection 
\begin{eqnarray*}
S_x &=& \mathbb{P}(v_x)\cap \mathbb{P}(\Lambda^2 g_x) \cap \mathbb{P}(\mbox{R}_x)\\
&=&\mbox{pl}(\mbox{Gr}_1(\mathbb{P}( T_x M \otimes \C))) \cap \mathbb{P}(\Lambda^2 g_x) \cap \mathbb{P}(\mbox{R}_x)
\end{eqnarray*}
of the three quadrics  in $\mathbb{P}(\Lambda^2 T_x M \otimes \C)$.  We consider a line $l$ tangent to the quadric $\mathbb{P}(g_x)$. By the discussion in the previous subsection  it corresponds to a point in $\mbox{pl}(\mbox{Gr}_1(\mathbb{P}( T_x M \otimes \C)))$. The condition that the line $l$ is tagent to the quadric $\mathbb{P}(g_x)$ is equivalent to the condition that $\pl(l) \in \mathbb{P}(\Lambda^2 g_x)$. So 
$$\pi \Big( (\mbox{id}_{\mathbb{P}(g_x)} \times \mbox{pl})  \big( \mathbb{P}(\mathcal{T}) \big) \Big)=\mbox{pl}(\mbox{Gr}_1(\mathbb{P}( T_x M \otimes \C))) \cap \mathbb{P}(\Lambda^2 g_x).$$
This means that,
$$S_x=\pi \Big( (\mbox{id}_{\mathbb{P}(g_x)} \times \mbox{pl})  \big( \mathbb{P}(\mathcal{T}) \big) \Big) \cap \mathbb{P}(\mbox{R}_x).$$
Therefore $S_x$ must have singularities 
$$\mbox{Sing}(S_x) \supset \mbox{Sing}\Big( \pi \Big( (\mbox{id}_{\mathbb{P}(g_x)} \times \mbox{pl})  \big( \mathbb{P}(\mathcal{T}) \big) \Big) \Big) \cap \mathbb{P}(\mbox{R}_x)=(\mathcal{C}_+  \cap \mathbb{P}(\mbox{R}_x)) \cup (\mathcal{C}_-  \cap \mathbb{P}(\mbox{R}_x)) .$$

\begin{defi}
The variety $S_x$ is called the local invariant of the Riemannian manifold $(M,g)$ at the point $x$.
\end{defi}

\begin{rem}
Notice, that if $\mbox{R}_x= \kappa \Lambda^2 g_x$, $\kappa \in \C^*$, the manifold at the point $x$ is a manifold of constant curvature in any two dimensional direction. In such a case, $S_x$ is not defined and we shall not consider such points on $M$.
\end{rem}

In the folowing we assume that the quadric $\mathbb{P}(\mbox{R}_x)$ intersects the non-singular points of $\pi \Big( (\mbox{id}_{\mathbb{P}(g_x)} \times \mbox{pl})  \big( \mathbb{P}(\mathcal{T}) \big) \Big)$ transversally and intersects the singular locus $\mathcal{C}_+ \cup  \mathcal{C}_-$ transversally as well. It follows by \cite{Harris}, Proposition 17.18 that  $S_x$ is the complete intersection of the quadrics $\mathbb{P}(v_x)$, $\mathbb{P}(\Lambda^2 g_x)$, $\mathbb{P}(\mbox{R}_x)$. 

\begin{rem}
Recall that two varieties intersect transversally if they intersect transversally at each point of their intersection, i.e. they are smooth at this point and their separate tangent spaces at that point span the tangent space of the ambient variety at that point. In other words if $X$ and $Y$ are projective subvarieties of $\mathbb{P}^n$, then $X$ and $Y$ intersect transversally if at every point $u \in X \cap Y$, $T_{u} X \oplus T_{u} Y = T_{u} \mathbb{P}^n$. Thus transversality  depends on the choice of the ambient variety. In particular, transversality always fails whenever two subvarieties are tangent.
\end{rem}

Recall that the complete intersection of three quadrics in $\mathbb{P}^5$ is a $K3$ surface. An exposition on $K3$ surfaces can be found in the Appendix of \cite{Tergiakidis}. Thus $S_x$ is a (singular) $K3$ surface. The quadric $\mathbb{P}(\mbox{R}_x)$ interesects the singular locus $\mathcal{C}_+ \cup \mathcal{C}_-$ transversally and each intersection $\mathbb{P}(\mbox{R}_x) \cap \mathcal{C}_+$, $\mathbb{P}(\mbox{R}_x) \cap \mathcal{C}_-$, consists of four ordinary double points (the transversal intersection of quadric and conic gives a $0$-dimensional variety of degree $4$). We will denote the set of these points by  $$\mbox{Sing}(S_x)=\{\pl(t_{+}^{1}),\mbox{pl}(t_{+}^{2}) ,\mbox{pl}(t_{+}^{3}) ,\mbox{pl}(t_{+}^{4}) ,\mbox{pl}(t_{-}^{1}) ,\mbox{pl}(t_{-}^{2}) ,\mbox{pl}(t_{-}^{3}) ,\mbox{pl}(t_{-}^{4}) \}.$$

Consider now the the algebraic subvariety
$$\tilde{S}_x=\pi^{-1}(\mathbb{P}(\mbox{R}_x)) \subset \mathbb{P}(T_x M \otimes \C) \times \mathbb{P}(\Lambda^2 T_x M \otimes \C).$$
The next step is to show, that $\tilde{S}_x$ is the resolution of the singular points of $S_x$. We consider the map
$$\tilde{\pi}:\tilde{S}_x \to S_x.$$
Then $\tilde{S}_x$ is the resolution of the singular points of $S_x$, if and only if
$$\tilde{S}_x \setminus \tilde{\pi}^{-1}(\mbox{Sing}(S_x)) \cong  S_x \setminus \mbox{Sing}(S_x).$$
By the definiton of $\tilde{\pi}^{-1}$ this is indeed an isomorphism. 

We would like to compute now $\tilde{\pi}^{-1}(\mbox{Sing}(S_x))$, or in other words to find the blow ups of the singular points $\pl(t_{+}^{i})$, $\pl(t_{-}^{j})$, $1\leq i,j \leq 4$.

\begin{rem}
Let's recall the notion of the blow up of a complex surface at a point. Let  $q\in U \subset X$ be an open neighborhood and $(x,y)$ local coordinates such that $q=(0,0)$ in this coordinate system. Define
$$\tilde{U}:=\{((x,y),[z,w]) \in U \times \mathbb{P}^1: \, xw=yz\}.$$
We have then the projection onto the first factor
\begin{eqnarray*}
p_U: \tilde{U} & \to & U \\
((x,y),[z,w]) & \mapsto & (x,y).
\end{eqnarray*}
If $(x,y) \neq (0,0)$, then $p_{U}^{-1}((x,y))=(((x,y),[z,w]) )$.
Furthermore we have $p_{U}^{-1}(q)=\{q\} \times \mathbb{P}^1$.
This implies that the restriction
$$p_U: p_{U}^{-1}(U \setminus \{q\}) \to U \setminus \{q\}$$
is an isomorphism and $p_{U}^{-1}(q) \cong \mathbb{P}^1$ is a curve contracted by $p_U$ to a point. Now let us take the gluing of $X$ and $\tilde{U}$ along $X \setminus \{q\}$ and $\tilde{U} \setminus \{q\} \cong U  \setminus \{q\}$. In this way we obtain a surface $\tilde{X}$ together with a morphism $p: \tilde{X} \to X$. Notice that $p$ gives an isomoprhism between $X \setminus \{q\}$ and $\tilde{X} \setminus p^{-1}(q)$ and contracts the curve $\mathbb{P}^1 \cong p^{-1}(q)$ to the point $q$.
The morphism $p: \tilde{X} \to X$ is called the blow up of $X$ along $q$. The curve $p^{-1}(q)\cong \mathbb{P}^1$ is called exceptional curve or exceptional divisor of the blow-up.
\end{rem}

\begin{comment}
Consider the map
$$\tilde{\tau}:\tilde{S}_x \rightarrow \mathbb{P}(g_x) .$$
Let's denote with $s_+$ and $s_-$ sections of $\tilde{\tau}$, such that
$$s_{\pm}(t_+ \cap t_-) = \mathbb{P}(T_{t_+ \cap t_-} t_{\pm}) \in \mathbb{P} (T_{t_+ \cap t_-} \mathbb{P}(g_x)).$$

\begin{rem}
The fact that $\mathbb{P}(T_{t_+ \cap t_-} t_{\pm})$ is a point in $\mathbb{P} (T_{t_+ \cap t_-} \mathbb{P}(g_x))$ comes from the following easy observation. Let $t_{\pm} \subset \mathbb{P}(g_x)$. Then $T_{t_+ \cap t_-} t_{\pm} \subset T_{t_+ \cap t_-} \mathbb{P}(g_x)$ But $T_{t_+ \cap t_-} t_{\pm}$ is $1$-dimensional, so its projectivizations is just a point, i.e.  $\mathbb{P}(T_{t_+ \cap t_-} t_{\pm}) \in \mathbb{P} (T_{t_+ \cap t_-} \mathbb{P}(g_x))$. Furthermore one can deduce that the projectivized tangent bundle of $t_{\pm}$ is a line and is a subset of $\mathbb{P}(\mathcal{T})$.
\end{rem}
\end{comment}

We obtain that
\begin{equation*}\label{eq: blow-ups t_+}
E_i:=\tilde{\pi}^{-1}(\pl(t_{+}^{i}))=\{ (t_{+}^{i} \cap t_-,\pl(t_{+}^{i})):\, t_{-} \subset \mathcal{F}_{-}\}  \cong \mathbb{P}^1,
\end{equation*}
\begin{equation*}\label{eq: blow-ups t_-}
F_j:=\tilde{\pi}^{-1}(\pl(t_{-}^{j}))=\{ (t_+ \cap t_{-}^{j},\pl(t_{-}^{j})):\, t_{+} \subset \mathcal{F}_{+}\} \cong \mathbb{P}^1,
\end{equation*}
for $1 \leq i,j \leq 4$.
Observe that this means, that  $\tilde{\pi}$  is the blow-up of $S_x$ along $\pl(t_{+}^{i}), \pl(t_{-}^{j})$ for $1 \leq i,j \leq 4$ and the curves $E_i, F_j$, for $1 \leq i,j \leq 4$ are the exceptional divisor of the blow-up. In other words, $\tilde{\pi}$ contracts the curves $E_i$ to the points $\pl(t_{+}^{i})$ and the curves $F_j$ to the points $\pl(t_{-}^{j})$ for $1 \leq i,j \leq 4$.

\centerline{\textbf{The branching curve $\Gamma_x$ }:}
We will show that the map
$$\tilde{\tau}:\tilde{S}_x \rightarrow \mathbb{P}(g_x) $$
is a double branched cover at a generic point, where $\tilde{\tau}$ is the restriction of $\tau$ to $\tilde{S}_x$. The term "double branched cover" means, that there exists a closed subset $\mbox{Br}$ of $\mathbb{P}(g_x)$, such that $\tilde{\tau}$ restricted to $\tilde{S}_x \setminus \mbox{Ram}$, where $\mbox{Ram}:= \tilde{\tau}^{-1}(\mbox{Br})$, is a topological double cover of $\mathbb{P}(g_x) \setminus \mbox{Br}$. Points in $\mbox{Br}$ and $\mbox{Ram}$ are called branching points and ramification points respectively. The term "generic" stands for the fact that, sometimes $\tilde{\tau}$ represents a branched double cover followed by a blow up.  Before describing the preimage $\tilde{\tau}^{-1}(t_+ \cap t_-)$ we would like to be more precise.

The block decomposition of the Riemann curvature operator in dimension four is given by
$$\cop=\begin{bmatrix}
A && B\\
B^t && C
\end{bmatrix},$$
where $A$ and $C$ correspond to the operators associated to
$$W_+ + \frac{\mbox{scal}}{24} g \varowedge g$$ 
and 
$$W_- + \frac{\mbox{scal}}{24} g \varowedge g$$
respectively and $B$ is the operator associated to the curvature-like tensor  
$$ \frac{1}{2}\overset{\circ}{\mbox{Ric}} \varowedge g=\frac{1}{2} \Big( \mbox{Ric}-\frac{\mbox{scal}}{4}g \Big) \varowedge g.$$
Recall, that in dimension four
$$\mbox{Rm}=W_++W_- + \frac{1}{2}\overset{\circ}{\mbox{Ric}} \varowedge g+\frac{\mbox{scal}}{24} g \varowedge g,$$
where $W_+, W_-$ denote the Weyl parts of the curvature and $\overset{\circ}{\mbox{Ric}}$ the traceless Ricci tensor.

Consider now the block decomposition above and let $u=u_1 + u_2 \in \Lambda_{+}^{2} (T_xM \otimes \C) \oplus \Lambda_{-}^{2} (T_xM \otimes \C)$. Then
\begin{eqnarray*}
\mbox{R}_x(u) &=& \Lambda^2g_x(\mathfrak{Rm}(u),u) \\
&=& \Lambda^2 g_x(A(u_1),u_1)+\Lambda^2g_x(B (u_2),u_1)+ \Lambda^2g_x(B^t(u_1),u_2)+\Lambda^2g_x(C(u_2),u_2)\\
&=& \Lambda^2 g_x(A(u_1),u_1)+2\Lambda^2g_x(B (u_2),u_1)+\Lambda^2g_x(C(u_2),u_2).\\
\end{eqnarray*}
Now the quadric $\mathbb{P}(\mbox{R}_x)$ is given by
\begin{eqnarray*}
\mathbb{P}(\mbox{R}_x) &=& \{ [u]=[u_1 +u_2] \in \mathbb{P}(\Lambda^2 T_x M \otimes \C):\, \Lambda^2 g_x\Big(A(u_1),u_1\Big)+\\
&&+2\Lambda^2g_x\Big(B (u_2),u_1\Big)+\Lambda^2g_x\Big(C(u_2),u_2\Big) =0\}.
\end{eqnarray*}

We would like to describe the intersection of $\mathbb{P}(\mbox{R}_x)$ with $\mathbb{P}\mbox{-span}(\pl(t_+),\pl(t_-))$. As explained previously, a point on the line $\mathbb{P}\mbox{-span}(\pl(t_+),\pl(t_-))$ is expressed as $[\lambda T_+ + \mu T_-]$. Let us set $u_1=\lambda T_+\in \Lambda^{2}_{+}(T_x M \otimes \C)$ and $u_2=\mu T_- \in \Lambda^{2}_{-}(T_x M \otimes \C)$. We obtain, that

\begin{comment}
Furthermore the point lies in the intersection of the quadric and the line if and only if 
$$\mbox{R}_x(\lambda T_+ + \mu T_-)=0.$$
Furthermore by the fact that $\pi \Big( (\mbox{id}_{\mathbb{P}(g_x)} \times \mbox{pl})  \big( \mathbb{P}(\mathcal{T}) \big) \Big)= \mbox{pl}(\mbox{Gr}_1(\mathbb{P}(T_x M \otimes \C))) \cap \mathbb{P}(\Lambda^2 g_x)$, we have that
$$\Lambda^2 g_x (\lambda T_+,\lambda T_+ )=\Lambda^2 g_x(\mbox{Id}_{\Lambda_+}(\lambda T_+),\lambda T_+)=0$$
and
$$\Lambda^2 g_x (\mu T_-,\mu T_- )=\Lambda^2 g_x(\mbox{Id}_{\Lambda_-}(\mu T_-),\mu T_-)=0.$$
\end{comment}

\begin{eqnarray*}
&&\Lambda^2 g_x\Big(A(\lambda T_+),\lambda T_+ \Big)+2\Lambda^2g_x\Big(B (\mu T_-),\lambda T_+ \Big)+\Lambda^2g_x\Big(C(\mu T_-),\mu T_- \Big)=0\\
& \Rightarrow & \lambda^2 \Lambda^2 g_x\Big( A (T_+), T_+\Big)+2\lambda \mu \Lambda^2 g_x\Big(B (T_-),T_+ \Big)+\mu^2 \Lambda^2g_x\Big(C(T_-),T_-\Big)=0 \\
& \Rightarrow &  \lambda^2 \Lambda^2 g_x\Big( \big( \mathfrak{W}_+ + \frac{\mbox{scal}}{12}\mbox{Id}_{\Lambda_+}  \big) (T_+), T_+\Big)+2\lambda \mu \Lambda^2 g_x\Big(B (T_-),T_+ \Big)+\\
&&+\mu^2 \Lambda^2g_x\Big( \big( \mathfrak{W}_- + \frac{\mbox{scal}}{12}\mbox{Id}_{\Lambda_-}  \big) (T_-),T_-\Big)=0\\
& \Rightarrow & \lambda^2 \Lambda^2 g_x\Big( \mathfrak{W}_+ (T_+), T_+\Big)+ \lambda^2 \frac{\mbox{scal}}{12}\Lambda^2 g_x\Big( T_+, T_+\Big)+2\lambda \mu \Lambda^2 g_x\Big(B (T_-),T_+ \Big)+\\
&&+\mu^2 \Lambda^2g_x\Big(\mathfrak{W}_- (T_-),T_-\Big)+\mu^2 \frac{\mbox{scal}}{12}\Lambda^2g_x\Big(T_-,T_-\Big)=0 \\
& \Rightarrow & \lambda^2 \Lambda^2 g_x\Big( \mathfrak{W}_+ (T_+), T_+\Big)+2\lambda \mu \Lambda^2 g_x\Big(B (T_-),T_+ \Big)+\mu^2 \Lambda^2g_x\Big(\mathfrak{W}_- (T_-),T_-\Big)=0,
\end{eqnarray*}
where $\mathfrak{W}_+$ and $\mathfrak{W}_-$ correspond to the operators associated to $W_+$ and $W_-$ respectively. Notice, that in the last implication we are using the fact, that $\pi \Big( (\mbox{id}_{\mathbb{P}(g_x)} \times \mbox{pl})  \big( \mathbb{P}(\mathcal{T}) \big) \Big)= \mbox{pl}(\mbox{Gr}_1(\mathbb{P}(T_x M \otimes \C))) \cap \mathbb{P}(\Lambda^2 g_x)$.

\begin{comment}
Notice, that we write $A-\frac{\mbox{scal}}{12}\mbox{Id}_{\Lambda_+}$ and $C - \frac{\mbox{scal}}{12}\mbox{Id}_{\Lambda_-}$, because of the following  observation:  $\mbox{Id}_{\Lambda_{\pm}}$ corresponds to the operator associated to the (2,0)-tensor $ \Lambda^2(g_x)=\frac{1}{2} g \varowedge g $ (we consider in this context $g \varowedge g$ as a (2,0)-tensor.

We obtain, that
\begin{equation*}
\lambda^2 \Lambda^2 g_x\Big( A (T_+), T_+\Big)+2\lambda \mu \Lambda^2 g_x\Big(B (T_-),T_+ \Big)+\mu^2 \Lambda^2g_x\Big(C(T_-),T_-\Big)=0.
\end{equation*}
\end{comment}

By assuming that $\mu \neq 0$ and setting $s=\frac{\lambda}{\mu}$ we obtain a quadratic equation in the variable $s$ given by
\begin{equation}\label{eq: intersection of P(R_x) and line equation}
\Lambda^2 g_x\Big(\mathfrak{W}_+(T_+), T_+\Big) s^2+2\Lambda^2 g_x\Big(B (T_-),T_+ \Big)s+\Lambda^2g_x\Big(\mathfrak{W}_-(T_-),T_-\Big)=0.
\end{equation}
We can consider the previous equation naturally, as an equation that determines $S_x$.
The discriminant of the equation is given by
$$\Delta=4 \Big( \Lambda^2 g_x(B (T_-),T_+) \Big)^2-4\Lambda^2 g_x\Big(\mathfrak{W}_+(T_+), T_+) \Lambda^2g_x\Big(\mathfrak{W_-}(T_-),T_-\Big).$$
Thus there are three possible cases for the intersection of the quadric and the line.
\begin{enumerate}[(i)]
\item If $\Delta \neq 0$, then the intersection consists of exactly two distinct points:
\begin{itemize}
\item $\mathbb{P}\mbox{-span}(\pl(t_+),\pl(t_-)) \cap \mathbb{P}(\mbox{R}_x)=\{ \pl(l),\pl(l^{\prime})\},$
where $\pl(l),\pl(l^{\prime}) \neq \pl(t_+), \pl(t_-)$. Then
$$\tilde{\pi}^{-1}(\pl(l))=(t_{+} \cap t_{-},\pl(l)), \, \tilde{\pi}^{-1}(\pl(l^{\prime}))=(t_+ \cap t_-,\pl(l^{\prime}))$$
and
$$\tilde{\tau}^{-1}(t_+ \cap t_-)= \{ \tilde{\pi}^{-1}(\pl(l)), \tilde{\pi}^{-1}(\pl(l^{\prime}))\}$$
are two distinct points. Both these points are nonsingular points of $\tilde{S}_x$.
\item $\mathbb{P}\mbox{-span}(\pl(t_{+}^{i}),\pl(t_-))  \cap \mathbb{P}(\mbox{R}_x)=\{ \pl(t_{+}^{i}), \pl(l)\},$
for some $i=1,...,4$, where $\pl(l)\neq \pl(t_{+}^{i}), \pl(t_-)$. Then
$$\tilde{\tau}^{-1}(t_{+}^{i} \cap t_-)= \{ (t_+ \cap t_-,\pl(t_{+}^{i})),  \tilde{\pi}^{-1}(\pl(l))\},$$
are two distinct points. Both these points are nonsingular points of $\tilde{S}_x$.
\item  $\mathbb{P}\mbox{-span}(\pl(t_+),\pl(t_{-}^{j})) \cap \mathbb{P}(\mbox{R}_x)=\{\pl(t_{-}^{j}), \pl(l)\},$
for some $j=1,...,4$, where $\pl(l) \neq \pl(t_+), \pl(t_{-}^{j})$. Then
$$\tilde{\tau}^{-1}(t_+ \cap t_{-}^{j})= \{ (t_+ \cap t_-, \pl(t_{-}^{j})),  \tilde{\pi}^{-1}(\pl(l))\},$$
are two distinct points. Both these points are nonsingular points of $\tilde{S}_x$.
\item $\mathbb{P}\mbox{-span}(\pl(t_{+}^{i}),\pl(t_{-}^{j}))  \cap \mathbb{P}(\mbox{R}_x)=\{ \pl(t_{+}^{i}), \pl(t_{-}^{j})\}$, for some $i,j=1,...,4$. Then 
$$\tilde{\tau}^{-1}(t_{+}^{i} \cap t_{-}^{j})= \{ (t_{+}^{i} \cap t_{-}^{j}, \pl(t_{+}^{i})),  (t_{+}^{i} \cap t_{-}^{j},\pl(t_{-}^{j}))\},$$
are two distinct points. Both these points are nonsingular points of $\tilde{S}_x$.
\end{itemize}
\item If $\Delta=0$, but not all coefficients are equal to zero,  then the line has exactly one double point in common with the quadric $\mathbb{P}(\mbox{R}_x)$, which is possible if and only if the line is tangent to the quadric  at that point:
\begin{itemize}
\item $\mathbb{P}\mbox{-span}(\pl(t_+),\pl(t_-))  \cap \mathbb{P}(\mbox{R}_x)=\{\pl(l)\}$, where $\pl(l) \neq \pl(t_+),\pl(t_-)$. In this case $\mathbb{P}\mbox{-span}(\pl(t_+),\pl(t_-))$  is tangent to the quadric $\mathbb{P}(R_x)$ at the point $\pl(l)$. Then 
$$\tilde{\tau}^{-1}(t_+ \cap t_-)= \{\tilde{\pi}^{-1}(\pl(l))\}.$$
Obviously in this case $t_+ \cap t_-$ corresponds to a branching point and $\tilde{\pi}^{-1}(\pl(l))$ is a ramification point.
\begin{comment}
We can write down then the equation for the branching curve $\Gamma_x$ at the point $t_+ \cap t_-$ by using the expressions (\ref{eq: homogeneous coordinates of t_+ in P^5}), (\ref{eq: homogeneous coordinates of t_ - in P^5}) and the fact that for such a choice of basis $\Lambda^2g_x$ is just the identity matrix. The point $\tilde{\pi}^{-1}(\pl(l))$ is a singular point of $\tilde{S}_x$ if and only if not only $\mathbb{P}\mbox{-span}(\pl(t_+,\pl(t_-))) $ is tangent to $\mathbb{P}(\mbox{R}_x)$ , but also the whole $\tilde{\pi}(\mathbb{P}(\mathcal{T}))$ is tangent to $\mathbb{P}(\mbox{R}_x)$ at the point $\pl(l)$, i.e. the tangent space of the one belongs to the tangent space of the other.
\end{comment}
\end{itemize}
\item If $\Delta=0$ and all coefficients are simultaneously equal to zero, then the line lies entirely in $\mathbb{P}(\mbox{R}_x)$:
\begin{itemize}
\item $\mathbb{P}\mbox{-span}(\pl(t_{+}^{i}),\pl(t_{-}^{j})) \subset \mathbb{P}(\mbox{R}_x)$, for some $i,j=1,...,4$. Then
\begin{eqnarray*}
\tilde{\tau}^{-1}(t_{+}^{i} \cap t_{-}^{j})&=& \{ \tilde{\pi}^{-1} (\pl(l)): \; \pl(l) \in \mathbb{P}\mbox{-span}(\pl(t_{+}^{i}),\pl(t_{-}^{j}))  \setminus \{ \pl(t_{+}^{i}),\pl(t_{-}^{j}) \} \} \cup \\
&& \cup \{(t_{+}^{i} \cap t_{-}^{j},\pl(t_{+}^{i})) \} \cup \{(t_{+}^{i} \cap t_{-}^{j},\pl(t_{-}^{j}))\} =:  \mathbb{P}^{1}_{t_{+}^{i} \cap t_{-}^{j}},
\end{eqnarray*}
where $ \mathbb{P}^{1}_{t_{+}^{i} \cap t_{-}^{j}} \cong \mathbb{P}\mbox{-span}(\pl(t_{+}^{i}),\pl(t_{-}^{j})) \cong \mathbb{P}^1$, since $\tilde{\pi}$ maps the curve $\tilde{\tau}^{-1}(t_{+}^{i} \cap t_{-}^{j})$ one to one onto the singular line $\mathbb{P}\mbox{-span}(\pl(t_{+}^{i}),\pl(t_{-}^{j}))$.  Here $t_{+}^{i} \cap t_{-}^{j}$ corresponds again to a branching point and in this special case the branching curve $\Gamma_x \subset \mathbb{P}(g_x)$ at the point $t_{+}^{i} \cap t_{-}^{j}$ is singular.  
\begin{comment}
We will call these points, singular points of the second type. We are highly interested under which assumptions the points of the curve $ \mathbb{P}^{1}_{t_+ \cap t_-}$ correspond to nonsingular points of $\tilde{S}_x$. This is true when 
\begin{enumerate}[(i)]
\item $\mathbb{P}(\mbox{R}_x)$ intersects $\tilde{\pi}(\mathbb{P}(\mathcal{T}))$ transeversaly along the line $\mathbb{P}\mbox{-span}(\pl(t_+),\pl(t_-)) $ at nonsingular points of $\tilde{\pi}(\mathbb{P}(\mathcal{T}))$ and\\
\item $\mathbb{P}(\mbox{R}_x)$ intersects $\mathcal{C}_+$ and $\mathcal{C}_-$ transeversaly at the points $\pl(t_+)$ and $\pl(t_-)$  as well.
\end{comment}
\end{itemize}
\end{enumerate}
Thus the branching curve is described by
\begin{eqnarray*}
\Gamma_x &=& \{ ([a^1,a^2],[b^1,b^2]) \in \mathbb{P}(S^{-}_{x}) \times \mathbb{P}(S^{+}_{x}): \\ \nonumber
&&\Big( \Lambda^2 g_x(B (T_-),T_+) \Big)^2-\Lambda^2 g_x\Big(\mathfrak{W}_+(T_+), T_+\Big) \Lambda^2g_x\Big(\mathfrak{W}_-(T_-),T_-\Big)=0   \}.
\end{eqnarray*}

\begin{rem}
The branching curve will serve as our local invariant in this text. Precisely, we will use this local invariant in oder to obtain a characterization for the singularity models for Type I singularities for four dimensional Ricci flows. The type of the curve is invariant under the choice of basis for $T_x M \otimes \C$. For example, we will see in the next section, that the branching curve associated to a point of $\mathbb{S}^3\times \R$  is a quadruple diagonal and that of $\mathbb{S}^2 \times \mathbb{S}^2$ is a double rectangle.
\end{rem}

The next propositions can be found in Nikulin's paper \cite{Nikulin}.

\begin{prop}\label{th: Prop1}
Assume that  the branching curve $\Gamma_x$ has only finite number of singular points. Then $\tilde{\tau}:\tilde{S}_x \to \mathbb{P}(g_x)$ is a branched double cover for all points $t_+ \cap t_- \in \Gamma_x$, except for the singular points, at which $\tilde{\tau}$ is a branched double cover followed by a blow-up.
\end{prop}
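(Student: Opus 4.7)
The plan is to leverage the three-case analysis of the fibre $\tilde\tau^{-1}(t_+ \cap t_-)$ established in the paragraphs immediately preceding the statement. The discriminant $\Delta$ of the quadratic (\ref{eq: intersection of P(R_x) and line equation}) cuts out $\Gamma_x$ inside $\mathbb{P}(g_x)=\mathbb{P}(S_x^-)\times\mathbb{P}(S_x^+)$, and a direct computation of $d\Delta$ shows that $\mathrm{Sing}(\Gamma_x)$ coincides precisely with the locus of case (iii), i.e.\ with the points at which all three coefficients of (\ref{eq: intersection of P(R_x) and line equation}) vanish simultaneously.

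Over $\mathbb{P}(g_x)\setminus\Gamma_x$ (case (i)) the fibre consists of two distinct points, and over the smooth locus $\Gamma_x\setminus\mathrm{Sing}(\Gamma_x)$ (case (ii)) it collapses to a single ramification point, so $\tilde\tau$ is already a classical branched double cover over the complement of $\mathrm{Sing}(\Gamma_x)$, with simple ramification along the smooth part of $\Gamma_x$; no new argument beyond the discriminant analysis is required here.

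The central step is the description of $\tilde\tau$ near a singular point $t_{+}^{i}\cap t_{-}^{j}$ of $\Gamma_x$, where the fibre is the entire $\mathbb{P}^1_{t_{+}^{i}\cap t_{-}^{j}}$. Introducing local affine coordinates $(s,t)$ on $\mathbb{P}(g_x)$ centred at $t_{+}^{i}\cap t_{-}^{j}$, the three coefficients $\alpha(s,t),\beta(s,t),\gamma(s,t)$ of (\ref{eq: intersection of P(R_x) and line equation}) become holomorphic functions vanishing at the origin, and $\tilde S_x$ is locally cut out in $\mathbb{P}(g_x)\times\mathbb{P}^1$ by
\begin{equation*}
\alpha(s,t)\lambda^2+2\beta(s,t)\lambda\mu+\gamma(s,t)\mu^2=0.
\end{equation*}
Finiteness of $\mathrm{Sing}(\Gamma_x)$ forces the origin to be an isolated common zero of $\alpha,\beta,\gamma$; inspecting the linear terms then identifies this surface with the double cover of the blow-up $\mathrm{Bl}_{(0,0)}\mathbb{P}(g_x)$ branched along the strict transform of $\Gamma_x$ together with the exceptional curve. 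Globally one obtains a factorization $\tilde\tau=q\circ p$ in which $q$ is the blow-up contraction $\mathrm{Bl}_{\mathrm{Sing}(\Gamma_x)}\mathbb{P}(g_x)\to\mathbb{P}(g_x)$ and $p$ is a branched double cover; because each exceptional divisor lies in the branch locus of $p$, every $\mathbb{P}^1$ fibre of $\tilde\tau$ is mapped isomorphically onto its image in the blow-up.

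The main obstacle is the local identification with the blow-up: one has to verify that the first-order jets of $(\alpha,\beta,\gamma)$ at each singular point are generic enough that the resulting double cover of the blown-up surface is smooth along the exceptional fibre, rather than degenerating further. The finiteness hypothesis on $\mathrm{Sing}(\Gamma_x)$ is what rules out such pathologies; an invariant version of the argument appears in Nikulin \cite{Nikulin}, and the present proposition is essentially the transcription of that argument to the positive-definite Riemannian setting.
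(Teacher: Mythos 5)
The paper itself offers no proof of this proposition: it is stated together with Proposition \ref{th: Prop2} and attributed to Nikulin \cite{Nikulin}, the only supporting argument being the three-case analysis of the fibres of $\tilde{\tau}$ carried out just before the statement. Your proposal is built on exactly that analysis, so structurally you are reconstructing the proof the paper leaves implicit; the difficulty is that the two steps you add on top of the case analysis are the ones that carry the real content, and both are shaky. First, you assert that ``a direct computation of $d\Delta$'' shows $\mathrm{Sing}(\Gamma_x)$ coincides \emph{precisely} with the case-(iii) locus $\{\alpha=\beta=\gamma=0\}$, where $\alpha,\beta,\gamma$ are the coefficients of (\ref{eq: intersection of P(R_x) and line equation}). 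Only the inclusion ``case (iii) $\Rightarrow$ singular'' is automatic. Since $\Delta/4=\beta^2-\alpha\gamma$ with $\alpha$ depending only on $[a^1,a^2]$ and $\gamma$ only on $[b^1,b^2]$, a point with $\alpha\neq 0$ and $\beta=\gamma=0$ is a singular point of $\Gamma_x$ as soon as $d\gamma=0$ there, i.e.\ as soon as the binary quartic $\gamma=\Lambda^2 g_x\big(\mathfrak{W}_-(T_-),T_-\big)$ has a multiple root; at such a point the fibre of $\tilde{\tau}$ is a single point, not a $\mathbb{P}^1$, and the dichotomy of the proposition fails. Excluding this requires invoking the standing transversality hypotheses (transversality of $\mathbb{P}(\mbox{R}_x)$ with $\mathcal{C}_{\pm}$ forces $\alpha$ and $\gamma$ to have simple roots) together with an argument that $(s,t)\mapsto(\alpha,\beta,\gamma)$ is transverse to the cone $\{\beta^2=\alpha\gamma\}$ away from its vertex. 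This is where the proposition actually lives and cannot be waved through.

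Second, your description of the factorization $\tilde{\tau}=q\circ p$ contains a concrete error: the exceptional curve of the blow-up at a case-(iii) point does \emph{not} lie in the branch locus of $p$, and the contracted fibre $\mathbb{P}^{1}_{t_{+}^{i}\cap t_{-}^{j}}$ is \emph{not} mapped isomorphically onto it. Since $\alpha,\beta,\gamma$ vanish to first order there, $\Gamma_x$ has multiplicity two (generically a node), so in the canonical resolution of a double cover the exceptional curve is removed from the pulled-back branch divisor with even multiplicity $2$, leaving only the strict transform of $\Gamma_x$; the preimage of the exceptional $\mathbb{P}^1$ is then a double cover of $\mathbb{P}^1$ branched at the two points where the strict transform meets it, i.e.\ a single rational curve mapping $2{:}1$. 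One sees this directly from your local model $\{\alpha\lambda^2+2\beta\lambda\mu+\gamma\mu^2=0\}$: the extension of $[s:t]$ to the contracted fibre sends $[\lambda:\mu]$ to $\big[-(2\partial_t\beta\,\lambda\mu+\partial_t\gamma\,\mu^2):(\partial_s\alpha\,\lambda^2+2\partial_s\beta\,\lambda\mu)\big]$, generically a degree-two map. The factorization you want is still available, and smoothness of $\tilde{S}_x$ along the contracted fibre follows from the non-vanishing of the differential of $\alpha\lambda^2+2\beta\lambda\mu+\gamma\mu^2$ in the $(s,t)$-directions for every $[\lambda:\mu]$, but not from the mechanism you give.
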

Recall that for a covering map $\tilde{\tau}:\tilde{S}_x \to \mathbb{P}(g_x)$, there exists a homeomorpish $\hat{\sigma}: \tilde{S}_x  \to \tilde{S}_x $, such that $\tilde{\tau} \circ \hat{\sigma} = \tilde{\tau} $, that is to say $\hat{\sigma}$ is a lift of $\tilde{\tau}$. The map $\hat{\sigma}$ is called a deck transformation.

\begin{prop}\label{th: Prop2}
Assume that  the branching curve $\Gamma_x$ has only finite number of singular points. Then the deck transformation $\hat{\sigma}$ of the branched double cover is everywhere defined on $\tilde{S}_x$.
\end{prop}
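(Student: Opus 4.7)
The plan is to factor $\tilde{\tau}$ through an intermediate model on which the deck transformation is manifestly a morphism, and then to lift it using functoriality of blowing up. By Proposition \ref{th: Prop1} the map $\tilde{\tau}:\tilde{S}_x\to\mathbb{P}(g_x)$ decomposes as
$$\tilde{S}_x\xrightarrow{\;b\;}Y\xrightarrow{\;d\;}\mathbb{P}(g_x),$$
where $d$ is a genuine branched double cover ramified over a divisor projecting onto $\Gamma_x$, and $b$ contracts the finitely many exceptional fibers $\mathbb{P}^{1}_{t_{+}^{i}\cap t_{-}^{j}}$ lying over the singular points of $\Gamma_x$ to ordinary double points of $Y$. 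The finiteness of $\mathrm{Sing}(\Gamma_x)$ is essential here, since it ensures that $b$ is a blow-up centered at only finitely many points of $Y$.

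First I would define the involution on the generic open set $\tilde{S}_x\setminus\mathrm{Ram}$ as the algebraic sheet-swap: a fiber of $\tilde{\tau}$ over a point outside $\Gamma_x$ consists of the two distinct roots of the quadratic equation (\ref{eq: intersection of P(R_x) and line equation}) in $s=\lambda/\mu$, and Vieta's formulas yield an explicit rational expression for the involution interchanging them. By construction this makes $\tilde{\tau}\circ\hat{\sigma}=\tilde{\tau}$ hold tautologically on the open set.

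Next I would descend this involution to $Y$ and check that it is a well-defined morphism there: on the complement of the ramification locus of $d$ it is simply the Galois involution of an unramified double cover; it extends across the ramification over smooth points of $\Gamma_x$ by the standard local normal form $z\mapsto z^{2}$ of a simple branched double cover, whose deck transformation $z\mapsto -z$ pointwise fixes the ramification locus; and it extends across the nodal singularities of $Y$ lying over $\mathrm{Sing}(\Gamma_x)$ via the local model $w^{2}=uv$, on which the involution $w\mapsto -w$ is a morphism set-theoretically fixing the node. One thus obtains an everywhere-defined involution $\hat{\sigma}_{Y}$ on $Y$.

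Finally I would lift $\hat{\sigma}_{Y}$ to $\tilde{S}_x$ via the universal property of blowing up: since the centers of $b$ are precisely the singular points of $Y$, and these are set-theoretically fixed by $\hat{\sigma}_{Y}$, the involution lifts uniquely to a morphism $\hat{\sigma}:\tilde{S}_x\to\tilde{S}_x$ satisfying $b\circ\hat{\sigma}=\hat{\sigma}_{Y}\circ b$, and the identity $\tilde{\tau}\circ\hat{\sigma}=\tilde{\tau}$ transfers automatically from the open set. The main obstacle I expect is identifying the singularities of $Y$ over $\mathrm{Sing}(\Gamma_x)$ as ordinary double points, since only then do the local model $w^{2}=uv$ and the universal property of blowing up an invariant subscheme both apply; this is where the transversality hypotheses from the previous subsection are indispensable, guaranteeing that the branched double cover $d$ acquires only $A_{1}$-singularities over $\mathrm{Sing}(\Gamma_x)$ and hence that $b$ is their minimal resolution.
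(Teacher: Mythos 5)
The first thing to note is that the paper contains no proof of this proposition: it is stated as a result imported from Nikulin \cite{Nikulin}, so there is no in-paper argument to measure yours against. Your strategy --- factor $\tilde{\tau}$ as a contraction $b$ of the finitely many fibres $\mathbb{P}^{1}_{t_{+}^{i}\cap t_{-}^{j}}$ followed by a genuine finite double cover $d\colon Y\to\mathbb{P}(g_x)$, construct the covering involution on $Y$, and lift it through $b$ --- is the standard route and is sound in outline. Two of your three steps are essentially fine, though you should justify the global existence of $Y$ (Stein factorization of the proper map $\tilde{\tau}$, since Proposition \ref{th: Prop1} is only a pointwise description), and your case-by-case local-model verification of $\hat{\sigma}_Y$ can be replaced by one line: for a finite degree-two morphism in characteristic zero the involution exists globally via the trace splitting $d_*\mathcal{O}_Y=\mathcal{O}_{\mathbb{P}(g_x)}\oplus L$.

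The genuine gap is the final lifting step, which you yourself identify as the main obstacle and then dispose of by assertion. You claim the transversality hypotheses of Section 2 guarantee that $Y$ acquires only $A_1$-singularities over $\mbox{Sing}(\Gamma_x)$ and that $b$ is their minimal resolution. Nothing supports this: the proposition assumes only that $\Gamma_x$ has finitely many singular points, not that they are ordinary nodes, and the stated transversality conditions concern $\mathbb{P}(\mbox{R}_x)$ against the join variety and the conics $\mathcal{C}_+\cup\mathcal{C}_-$; they do not control the local analytic type of $\Gamma_x=\{\Delta=0\}$ at a point $t_{+}^{i}\cap t_{-}^{j}$ where the whole line $\mathbb{P}\mbox{-span}(\pl(t_{+}^{i}),\pl(t_{-}^{j}))$ lies in $\mathbb{P}(\mbox{R}_x)$. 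Worse-than-nodal isolated singularities of $\Gamma_x$ are not excluded by the hypothesis, and then the local model $w^{2}=uv$ and the description of $b$ as the blow-up of a reduced point both fail, so the universal-property argument as written does not apply --- and this extension over the contracted curves is the entire content of the proposition. The step can be repaired without the $A_1$ identification: $\hat{\sigma}_Y$ covers $\mbox{id}_{\mathbb{P}(g_x)}$, hence fixes each singular point of $Y$ individually (each being the unique preimage of its image point), so it preserves the reduced singular locus, and one can then invoke either the universal property of blowing up that invariant ideal or, when $\tilde{S}_x$ is smooth, the uniqueness of the minimal resolution; but either repair requires establishing facts about $b$ and $\tilde{S}_x$ that neither you nor the paper verifies.
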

Then there are given on $\tilde{S}_x$ nonsingular rational curves (exceptional curves) $\bar{E}_i:=\hat{\sigma}(E_i) \cong \mathbb{P}^1$ and $\bar{F}_j:=\hat{\sigma}(F_j) \cong \mathbb{P}^1$, where $1 \leq i,j\leq 4$. The interested reader can look up Chapter 2.3 of  \cite{Tergiakidis} for further details.

\begin{comment}
\begin{prop}\label{th: Prop3}
$\tilde{S}_x$ is nonsingular if and only if the branch curve $\Gamma_x$ has only singular points of the second type and they are ordinary double points.
\end{prop}
\end{comment}

\section{Examples of local invariants}

In this section we compute the branching curve (our local invariant) for solutions $(M,g(t))$. We look at the examples of $\mathbb{S}^3 \times \R$, $\mathbb{S}^2 \times \mathbb{S}^2$, $\mathbb{S}^2 \times \R^2$, $\mathbb{P}^2$ and $ \mathbb{S}^4$. More explicit computation can be found in \cite{Tergiakidis}.

\subsection{The example of $(\mathbb{S}^3 \times \R,g(t))$}

The initial metric $g_0$ (with respect to spherical coordinates on the $\mathbb{S}^3$ factor)  is given by
$$g_0=d \phi_{1}^{2}+\sin^2 \phi_1 d \phi_{2}^{2}+ \sin^2 \phi_1 \sin^2 \phi_2 d \phi_{3}^{2}+dx^2.$$
Recall that the Ricci flow evolves each factor of a product metric seperately and if we use the formula for the evolution of the round metric on the sphere, we obtain that a solution to the Ricci flow is given by
$$g(t)=(1-4t)d \phi_{1}^{2}+(1-4t)\sin^2 \phi_1 d \phi_{2}^{2}+(1-4t) \sin^2 \phi_1 \sin^2 \phi_2 d \phi_{3}^{2}+dx^2.$$
The set $\Big\{ \frac{\partial}{\partial \phi_1},\frac{\partial}{\partial \phi_2} \,\frac{\partial}{\partial \phi_3} ,\frac{\partial}{\partial x}  \Big\}$
constitutes a basis for $T_x M$. We obtain a time-dependent orthonormal frame, with respect to which the metric becomes diagonal by setting
$$\Big\{ e_a=\frac{1}{\sqrt{1-4t}}\frac{\partial}{\partial \phi_1},e_b=\frac{1}{\sqrt{1-4t}\sin\phi_1}\frac{\partial}{\partial \phi_2},e_c= \frac{1}{\sqrt{1-4t} \sin\phi_1 \sin \phi_2}\frac{\partial}{\partial \phi_3} ,e_d=\frac{\partial}{\partial x}   \Big\},$$
where $e_{\alpha}:=e_{\alpha}(x,t)$, $\alpha=a,b,c,d$.

The components of the $(4,0)$-Riemann curvature tensor are given by
$$\tensor{R}{_{abba}}=\tensor{R}{_{acca}}=\tensor{R}{_{bccb}}=\frac{1}{1-4t}.$$

We are now in position to compute the scalar curvature. The Ricci curvature is
$$\tensor{R}{_{aa}}=\tensor{R}{_{bb}}=\tensor{R}{_{cc}}=\frac{2}{1-4t}.$$
Thus $\mbox{scal}=\frac{6}{1-4t}$ and $\frac{\mbox{scal}}{12}=\frac{1}{2(1-4t)}$. Furthermore
\begin{eqnarray*}
\Lambda^2g_x(\cop(f_{1}^{+}),f_{1}^{+}) &=& \Lambda^2g_x(\cop(\frac{1}{\sqrt{2}} (e_a \wedge e_b + e_c \wedge e_d)),\frac{1}{\sqrt{2}} (e_a \wedge e_b + e_c \wedge e_d)) \\
&=&\frac{1}{2}(R_{abba}+R_{abdc}+R_{cdba}+R_{cddc})\\
&=&\frac{1}{2(1-4t)}.
\end{eqnarray*}

\begin{eqnarray*}
\Lambda^2g_x(\cop(f_{2}^{+}),f_{2}^{+}) &=& \Lambda^2g_x(\cop(\frac{1}{\sqrt{2}} (e_a \wedge e_c - e_b \wedge e_d)),\frac{1}{\sqrt{2}} (e_a \wedge e_c - e_b \wedge e_d)) \\
&=&\frac{1}{2}(R_{acca}-R_{acdb}+R_{bdca}+R_{bddb})\\
&=&\frac{1}{2(1-4t)}.
\end{eqnarray*}

\begin{eqnarray*}
\Lambda^2g_x(\cop(f_{3}^{+}),f_{3}^{+}) &=& \Lambda^2g_x(\cop(\frac{1}{\sqrt{2}} (e_a \wedge e_d + e_b \wedge e_c)),\frac{1}{\sqrt{2}} (e_a \wedge e_d + e_b \wedge e_c)) \\
&=&\frac{1}{2}(R_{adda}-R_{adcb}+R_{bcda}+R_{bccb})\\
&=&\frac{1}{2(1-4t)}.
\end{eqnarray*}

\begin{eqnarray*}
\Lambda^2g_x(\cop(f_{1}^{-}),f_{1}^{+}) &=& \Lambda^2g_x(\cop(\frac{1}{\sqrt{2}} (e_a \wedge e_b - e_c \wedge e_d)),\frac{1}{\sqrt{2}} (e_a \wedge e_b + e_c \wedge e_d)) \\
&=&\frac{1}{2}(R_{abba}+R_{abdc}-R_{cdba}-R_{cddc})\\
&=&\frac{1}{2(1-4t)}.
\end{eqnarray*}

\begin{eqnarray*}
\Lambda^2g_x(\cop(f_{2}^{-}),f_{2}^{+}) &=& \Lambda^2g_x(\cop(\frac{1}{\sqrt{2}} (e_a \wedge e_c + e_b \wedge e_d)),\frac{1}{\sqrt{2}} (e_a \wedge e_c - e_b \wedge e_d)) \\
&=&\frac{1}{2}(R_{acca}-R_{acdb}+R_{bdca}-R_{bddb})\\
&=&\frac{1}{2(1-4t)}.
\end{eqnarray*}

\begin{eqnarray*}
\Lambda^2g_x(\cop(f_{3}^{-}),f_{3}^{+}) &=& \Lambda^2g_x(\cop(\frac{1}{\sqrt{2}} (e_a \wedge e_d - e_b \wedge e_c)),\frac{1}{\sqrt{2}} (e_a \wedge e_d + e_b \wedge e_c)) \\
&=&\frac{1}{2}(R_{adda}+R_{adcb}-R_{bcda}-R_{bccb})\\
&=&-\frac{1}{2(1-4t)}.
\end{eqnarray*}

\begin{eqnarray*}
\Lambda^2g_x(\cop(f_{1}^{-}),f_{1}^{-}) &=& \Lambda^2g_x(\cop(\frac{1}{\sqrt{2}} (e_a \wedge e_b - e_c \wedge e_d)),\frac{1}{\sqrt{2}} (e_a \wedge e_b - e_c \wedge e_d)) \\
&=&\frac{1}{2}(R_{abba}-R_{abdc}-R_{cdba}+R_{cddc})\\
&=&\frac{1}{2(1-4t)}.
\end{eqnarray*}

\begin{eqnarray*}
\Lambda^2g_x(\cop(f_{2}^{-}),f_{2}^{-}) &=& \Lambda^2g_x(\cop(\frac{1}{\sqrt{2}} (e_a \wedge e_c + e_b \wedge e_d)),\frac{1}{\sqrt{2}} (e_a \wedge e_c + e_b \wedge e_d)) \\
&=&\frac{1}{2}(R_{acca}+R_{acdb}+R_{bdca}+R_{bddb})\\
&=&\frac{1}{2(1-4t)}.
\end{eqnarray*}

\begin{eqnarray*}
\Lambda^2g_x(\cop(f_{3}^{-}),f_{3}^{-}) &=& \Lambda^2g_x(\cop(\frac{1}{\sqrt{2}} (e_a \wedge e_d - e_b \wedge e_c)),\frac{1}{\sqrt{2}} (e_a \wedge e_d - e_b \wedge e_c)) \\
&=&\frac{1}{2}(R_{adda}-R_{adcb}-R_{bcda}+R_{bccb})\\
&=&\frac{1}{2(1-4t)}.
\end{eqnarray*}
This means that the matrices of the bilinear forms 
$$\Lambda^2g_x \Big( \mathfrak{W}_+(\cdot),\cdot\Big)=\Lambda^2g_x \Big( \big( A -  \frac{\mbox{scal}}{12}\mbox{Id}_{\Lambda_+}  \big)(\cdot),\cdot\Big)$$ 
and 
$$\Lambda^2g_x \Big(\mathfrak{W}_-(\cdot),\cdot\Big)=\Lambda^2g_x \Big( \big( C -  \frac{\mbox{scal}}{12}\mbox{Id}_{\Lambda_-}  \big)(\cdot),\cdot\Big)$$ 
are given by

\begin{eqnarray*}
\begin{bmatrix}
0 && 0 && 0\\
0 && 0 && 0\\
0 && 0 && 0
\end{bmatrix}
\end{eqnarray*}
and  the matrix of the bilinear form $\Lambda^2g_x \Big( B(\cdot),\cdot\Big)$ by

\begin{eqnarray*}
\begin{bmatrix}
\frac{1}{2(1-4t)} && 0 && 0\\
0 && \frac{1}{2(1-4t)} && 0\\
0 && 0 &&- \frac{1}{2(1-4t)}
\end{bmatrix}
.
\end{eqnarray*}
We are now going to compute the branching curve. Obviously
$$\Lambda^2g_x \Big( \mathfrak{W}_+(T_+),T_+ \Big)=\Lambda^2g_x \Big( \mathfrak{W}_-(T_-),T_- \Big)=0$$
and
$$\Lambda^2g_x \Big( B(T_-),T_+ \Big) = -\frac{2}{1-4t} a^1 a^2 b^1 b^2+\frac{1}{1-4t}(a^{1})^2(b^{2})^2+\frac{1}{1-4t}(a^{2})^2(b^{1})^2,$$
where  
$$T_-= 2 ib^1b^2 f_{1}^{-} +i\{(b^{2})^{2}-(b^{1})^{2}\}f_{2}^{-} +[(b^{1})^{2}+(b^{2})^{2}]f_{3}^{-}$$
and
$$T_+=2 ia^1a^2f_{1}^{+}+ i\{(a^{2})^{2}-(a^{1})^{2}\}f_{2}^{+} +[ -(a^{1})^{2}-(a^{2})^{2})]f_{3}^{+}.$$
We compute that 
$$\Big[ \Lambda^2g_x \Big( B(T_-),T_+\Big) \Big]^2=\frac{1}{(1-4t)^2}(a^1b^2-a^2b^1)^4.$$
Thus
$$\Gamma_x=\{ ([a^1,a^2],[b^1,b^2]) \in \mathbb{P}(S^{-}_{x}) \times \mathbb{P}(S_{x}^{+}): \, (a^1b^2-a^2b^1)^4=0  \}.$$
This curve is never smooth and has multiplicity four. Notice, that in this the branching curve represents geometrically a quadruple diagonal.

\subsection{The example of $(\mathbb{S}^2 \times \mathbb{S}^2,g(t))$}

The initial metric $g_0$ (with respect to spherical coordinates on both $\mathbb{S}^2$ factors)  is given by
$$g_0=d \phi_{1}^{2}+\sin^2 \phi_1 d \phi_{2}^{2}+ d \psi_{1}^{2}+\sin^2 \psi_1 d \psi_{2}^2.$$
Now a solution to the Ricci flow is given by
$$g(t)=(1-2t)d \phi_{1}^{2}+(1-2t)\sin^2 \phi_1 d \phi_{2}^{2}+ (1-2t)d \psi_{1}^{2}+(1-2t)\sin^2 \psi_1 d \psi_{2}^2.$$
The set $\Big\{ \frac{\partial}{\partial \phi_1},\frac{\partial}{\partial \phi_2} \,\frac{\partial}{\partial \psi_1} ,\frac{\partial}{\partial \psi_2}  \Big\}$
constitutes a basis for $T_x M$. We obtain an orthonormal frame, with respect to which the metric becomes diagonal by setting
$$\Big\{ e_a=\frac{1}{\sqrt{1-2t}}\frac{\partial}{\partial \phi_1},e_b=\frac{1}{\sqrt{1-2t}\sin\phi_1}\frac{\partial}{\partial \phi_2},e_c= \frac{1}{\sqrt{1-2t}}\frac{\partial}{\partial \psi_1} ,e_d=\frac{1}{\sqrt{1-2t}\sin\psi_1}\frac{\partial}{\partial \psi_2}  \Big\}.$$

The components of the $(4,0)$-Riemann curvature tensor are given by
$$\tensor{R}{_{abba}}=\tensor{R}{_{cddc}}=\frac{1}{1-2t}.$$

The Ricci curvature is
$$\tensor{R}{_{aa}}=tensor{R}{_{bb}}=\tensor{R}{_{cc}}=\tensor{R}{_{dd}}=\frac{1}{1-2t}.$$
Thus $\mbox{scal}=\frac{4}{1-2t}$ and $\frac{\mbox{scal}}{12}=\frac{1}{3(1-2t)}$. Furthermore
\begin{eqnarray*}
\Lambda^2g_x(\cop(f_{1}^{+}),f_{1}^{+}) &=& \Lambda^2g_x(\cop(\frac{1}{\sqrt{2}} (e_a \wedge e_b + e_c \wedge e_d)),\frac{1}{\sqrt{2}} (e_a \wedge e_b + e_c \wedge e_d)) \\
&=&\frac{1}{2}(R_{abba}+R_{abdc}+R_{cdba}+R_{cddc})\\
&=&\frac{1}{2(1-2t)}.
\end{eqnarray*}

\begin{eqnarray*}
\Lambda^2g_x(\cop(f_{1}^{-}),f_{1}^{-}) &=& \Lambda^2g_x(\cop(\frac{1}{\sqrt{2}} (e_a \wedge e_b - e_c \wedge e_d)),\frac{1}{\sqrt{2}} (e_a \wedge e_b - e_c \wedge e_d)) \\
&=&\frac{1}{2}(R_{abba}-R_{abdc}-R_{cdba}+R_{cddc})\\
&=&\frac{1}{2(1-2t)}.
\end{eqnarray*}
This means that the matrices of the bilinear forms 
$$\Lambda^2g_x \Big( \mathfrak{W}_+(\cdot),\cdot\Big)=\Lambda^2g_x \Big( \big( A -  \frac{\mbox{scal}}{12}\mbox{Id}_{\Lambda_+}  \big)(\cdot),\cdot\Big)$$ 
and 
$$\Lambda^2g_x \Big(\mathfrak{W}_-(\cdot),\cdot\Big)=\Lambda^2g_x \Big( \big( C -  \frac{\mbox{scal}}{12}\mbox{Id}_{\Lambda_-}  \big)(\cdot),\cdot\Big)$$ 
are given by

\begin{eqnarray*}
\begin{bmatrix}
\frac{1}{6(1-2t)}&& 0 && 0\\
0 && -\frac{1}{3(1-2t)} && 0\\
0 && 0 && -\frac{1}{3(1-2t)}
\end{bmatrix}
\end{eqnarray*}
and the matrix of the bilinear form $\Lambda^2g_x \Big( B(\cdot),\cdot\Big)$ by
\begin{eqnarray*}
\begin{bmatrix}
0 && 0 && 0\\
0 && 0 && 0\\
0 && 0 &&0
\end{bmatrix}
.
\end{eqnarray*}
We are now going to compute the branching curve. 

$$\Lambda^2g_x \Big( \mathfrak{W}_+(T_+),T_+ \Big)=-\frac{2}{3(1-2t)}(a^1)^2(a^2)^2,$$
$$\Lambda^2g_x \Big( \mathfrak{W}_-(T_-),T_- \Big)=-\frac{2}{3(1-2t)}(b^1)^2(b^2)^2.$$
where  
$$T_-= 2 ib^1b^2 f_{1}^{-} +i\{(b^{2})^{2}-(b^{1})^{2}\}f_{2}^{-} +[(b^{1})^{2}+(b^{2})^{2}]f_{3}^{-}$$
and
$$T_+=2 ia^1a^2f_{1}^{+}+ i\{(a^{2})^{2}-(a^{1})^{2}\}f_{2}^{+} +[ -(a^{1})^{2}-(a^{2})^{2})]f_{3}^{+}.$$
Thus

$$\Gamma_x=\{ ([a^1,a^2],[b^1,b^2]) \in \mathbb{P}(S^{-}_{x}) \times \mathbb{P}(S_{x}^{+}) : \, (a^1a^2b^1b^2)^2=0  \}.$$
Notice, that in this the branching curve represents geometrically a double rectangle.

\subsection{The example of $(\mathbb{S}^2 \times \R^2,g(t))$}

The initial metric $g_0$ (with respect to spherical coordinates on the $\mathbb{S}^2$ factor)  is given by
$$g_0=d \phi_{1}^{2}+\sin^2 \phi_1 d \phi_{2}^{2}+ dx^2+dy^2.$$
In this case a solution to the Ricci flow is given by
$$g(t)=(1-2t)d \phi_{1}^{2}+(1-2t)\sin^2 \phi_1 d \phi_{2}^{2}+dx^2+dy^2.$$
The set $\Big\{ \frac{\partial}{\partial \phi_1},\frac{\partial}{\partial \phi_2} \,\frac{\partial}{\partial x}  ,\frac{\partial}{\partial y}  \Big\}$
constitutes a basis for $T_x M$. We obtain an orthonormal frame, with respect to which the metric becomes diagonal by setting
$$\Big\{ e_a=\frac{1}{\sqrt{1-2t}}\frac{\partial}{\partial \phi_1},e_b=\frac{1}{\sqrt{1-2t}\sin\phi_1}\frac{\partial}{\partial \phi_2},e_c=\frac{\partial}{\partial x} ,e_d=\frac{\partial}{\partial y}   \Big\}$$

The components of the $(4,0)$-Riemann curvature tensor are given by
$$\tensor{R}{_{abba}}=\frac{1}{1-2t}.$$
The Ricci curvature is
$$\tensor{R}{_{aa}}=\tensor{R}{_{bb}}=\frac{1}{1-2t}.$$
Thus the scalar curvature is given by $\mbox{scal}=\frac{2}{1-2t}$ and $\frac{\mbox{scal}}{12}=\frac{1}{6(1-2t)}$.
Furthermore
\begin{eqnarray*}
\Lambda^2g_x(\cop(f_{1}^{+}),f_{1}^{+}) &=& \Lambda^2g_x(\cop(\frac{1}{\sqrt{2}} (e_a \wedge e_b + e_c \wedge e_d)),\frac{1}{\sqrt{2}} (e_a \wedge e_b + e_c \wedge e_d)) \\
&=&\frac{1}{2}(R_{abba}+R_{abdc}+R_{cdba}+R_{cddc})\\
&=&\frac{1}{2(1-2t)}.
\end{eqnarray*}

\begin{eqnarray*}
\Lambda^2g_x(\cop(f_{1}^{-}),f_{1}^{+}) &=& \Lambda^2g_x(\cop(\frac{1}{\sqrt{2}} (e_a \wedge e_b - e_c \wedge e_d)),\frac{1}{\sqrt{2}} (e_a \wedge e_b + e_c \wedge e_d)) \\
&=&\frac{1}{2}(R_{abba}+R_{abdc}-R_{cdba}-R_{cddc})\\
&=&\frac{1}{2(1-2t)}.
\end{eqnarray*}

\begin{eqnarray*}
\Lambda^2g_x(\cop(f_{1}^{-}),f_{1}^{-}) &=& \Lambda^2g_x(\cop(\frac{1}{\sqrt{2}} (e_a \wedge e_b - e_c \wedge e_d)),\frac{1}{\sqrt{2}} (e_a \wedge e_b - e_c \wedge e_d)) \\
&=&\frac{1}{2}(R_{abba}-R_{abdc}-R_{cdba}+R_{cddc})\\
&=&\frac{1}{2(1-2t)}.
\end{eqnarray*}
This means that the matrices of the bilinear forms 
$$\Lambda^2g_x \Big( \mathfrak{W}_+(\cdot),\cdot\Big)=\Lambda^2g_x \Big( \big( A -  \frac{\mbox{scal}}{12}\mbox{Id}_{\Lambda_+}  \big)(\cdot),\cdot\Big)$$ 
and 
$$\Lambda^2g_x \Big(\mathfrak{W}_-(\cdot),\cdot\Big)=\Lambda^2g_x \Big( \big( C -  \frac{\mbox{scal}}{12}\mbox{Id}_{\Lambda_-}  \big)(\cdot),\cdot\Big)$$ 
are given by

\begin{eqnarray*}
\begin{bmatrix}
\frac{1}{3(1-2t)} && 0 && 0\\
0 && -\frac{1}{6(1-2t)}  && 0\\
0 && 0 && -\frac{1}{6(1-2t)} 
\end{bmatrix}
\end{eqnarray*}
and the matrix of the bilinear form $\Lambda^2g_x \Big( B(\cdot),\cdot\Big)$ by

\begin{eqnarray*}
\begin{bmatrix}
\frac{1}{2(1-2t)} && 0 && 0\\
0 && 0 && 0\\
0 && 0 &&0
\end{bmatrix}
.
\end{eqnarray*}

We are now going to compute the branching curve. 

$$\Lambda^2g_x \Big( \mathfrak{W}_+(T_+),T_+\Big)=-\frac{2}{(1-2t)}(a^1)^2(a^2)^2,$$
$$\Lambda^2g_x \Big( \mathfrak{W}_-(T_-),T_- \Big)=-\frac{2}{(1-2t)}(b^1)^2(b^2)^2,$$
$$\Lambda^2g_x \Big( B(T_-),T_+ \Big) = -\frac{2}{1-2t} a^1 a^2 b^1 b^2.$$
where  
$$T_-= 2 ib^1b^2 f_{1}^{-} +i\{(b^{2})^{2}-(b^{1})^{2}\}f_{2}^{-} +[(b^{1})^{2}+(b^{2})^{2}]f_{3}^{-}$$
and
$$T_+=2 ia^1a^2f_{1}^{+}+ i\{(a^{2})^{2}-(a^{1})^{2}\}f_{2}^{+} +[ -(a^{1})^{2}-(a^{2})^{2})]f_{3}^{+}.$$
We compute that 
$$\Big[ \Lambda^2g_x \Big( B(T_-),T_+\Big) \Big]^2=\frac{4}{(1-2t)^2}(a^1a^2b^1b^2)^2=\Lambda^2g_x \Big( \mathfrak{W}_+(T_+),T_+\Big)\Lambda^2g_x \Big( \mathfrak{W}_-(T_-),T_- \Big).$$
One observes, that in this case the branching curve doesn't exist.

\subsection{The example of $(\mathbb{P}^2,g(t))$}

The initial metric $g_{FS}$  is the Fubini-Study metric.
A solution to the Ricci flow is given by
$$g(t)=(1-2 \kappa t) g_{FS},$$
where $\kappa>0$.
By working exactly in the same way as is the previous examples one can obtain that the matrix of the bilinear form 
$$\Lambda^2g_x \Big( \mathfrak{W}_+(\cdot),\cdot\Big)=\Lambda^2g_x \Big( (A-\frac{\mbox{scal}}{12} \mbox{Id}_{\Lambda_+})(\cdot),\cdot\Big)$$
 is given by

\begin{eqnarray*}
\begin{bmatrix}
\frac{1}{2(1-2\kappa t)}-\frac{\mbox{scal}}{12} && 0 && 0\\
0 && \frac{1}{2(1-2\kappa t)}-\frac{\mbox{scal}}{12}  && 0\\
0 && 0 && \frac{1}{2(1-2\kappa t)}-\frac{\mbox{scal}}{12} 
\end{bmatrix}
,
\end{eqnarray*}
that of 
$$\Lambda^2g_x \Big( \mathfrak{W}_-(\cdot),\cdot\Big)=\Lambda^2g_x \Big( (C-\frac{\mbox{scal}}{12} \mbox{Id}_{\Lambda_-})(\cdot),\cdot\Big)$$
by
\begin{eqnarray*}
\begin{bmatrix}
\frac{3}{2(1-2\kappa t)} -\frac{\mbox{scal}}{12}&& 0 && 0\\
0 && -\frac{\mbox{scal}}{12} && 0\\
0 && 0 && -\frac{\mbox{scal}}{12}
\end{bmatrix}
\end{eqnarray*}
and finally the matrix of the bilinear form $\Lambda^2g_x \Big( B(\cdot),\cdot\Big)$ by
\begin{eqnarray*}
\begin{bmatrix}
0 && 0 && 0\\
0 && 0 && 0\\
0 && 0 &&0
\end{bmatrix}
.
\end{eqnarray*}
We are now going to compute the branching curve. 

$$\Lambda^2g_x \Big( \mathfrak{W}_+ (T_+),T_+ \Big)=0,$$
$$\Lambda^2g_x \Big( \mathfrak{W}_- (T_-),T_-\Big)=-\Big( \frac{6}{1-2\kappa t} - \frac{\mbox{scal}}{3} \Big)(b^1)^2(b^2)^2 .$$
where  
$$T_-= 2 ib^1b^2 f_{1}^{-} +i\{(b^{2})^{2}-(b^{1})^{2}\}f_{2}^{-} +[(b^{1})^{2}+(b^{2})^{2}]f_{3}^{-}$$
and
$$T_+=2 ia^1a^2f_{1}^{+}+ i\{(a^{2})^{2}-(a^{1})^{2}\}f_{2}^{+} +[ -(a^{1})^{2}-(a^{2})^{2})]f_{3}^{+}.$$
Thus there is no curve and the branching locus is the whole quadric $\mathbb{P}(g_x)$.

\subsection{The example of $(\mathbb{S}^4,g(t))$}

In this section we  show that  the local invarants for the solution $(\mathbb{S}^4 ,g(t))$ do not exist.
The initial metric $g_0$ (with respect to spherical coordinates on $\mathbb{S}^4$)  is given by
$$g_0=d \phi_{1}^{2}+\sin^2 \phi_1 d \phi_{2}^{2}+ sin^2 \phi_1 \sin^2 \phi_2 d \phi_{3}^2+\sin^2 \sin^2 \phi_2 \sin^2 \phi_3 d \phi_{4}^2.$$
A solution to the Ricci flow is given by
$$g(t)=(1-6t)d \phi_{1}^{2}+(1-6t)\sin^2 \phi_1 d \phi_{2}^{2}+(1-6t) sin^2 \phi_1 \sin^2 \phi_2 d \phi_{3}^2+(1-6t)\sin^2 \sin^2 \phi_2 \sin^2 \phi_3 d \phi_{4}^2.$$
Working exactly as in the previous examples ones can compute that matrices of the bilinear forms $\Lambda^2g_x \Big( \mathfrak{W}_+(\cdot),\cdot\Big)$and $\Lambda^2g_x \Big( \mathfrak{W}_-(\cdot),\cdot\Big)$ are given by

\begin{eqnarray*}
\begin{bmatrix}
\frac{1}{3(1-6t)} && 0 && 0\\
0 && \frac{1}{3(1-6t)}  && 0\\
0 && 0 && \frac{1}{3(1-6t)} 
\end{bmatrix}
\end{eqnarray*}
and the matrix of the bilinear form $\Lambda^2g_x \Big( B(\cdot),\cdot\Big)$ by

\begin{eqnarray*}
\begin{bmatrix}
0 && 0 && 0\\
0 && 0 && 0\\
0 && 0 &&0
\end{bmatrix}
,
\end{eqnarray*}
which implies that 

$$\Lambda^2g_x \Big( \mathfrak{W}_+(T_+),T_+\Big)=0,$$
$$\Lambda^2g_x \Big( \mathfrak{W}_-(T_-),T_- \Big)=0,$$
with $T_-$ and $T_+$ as in the previous examples.
Thus in this case the branching doesn't exist.

\section{Type I singularities and the branching curve}

By the results of Naber \cite{Naber} and Enders, M\"uller, Topping \cite{EMT} on Type I singularities for the Ricci flow, it follows that along any sequence of times converging to the finite extinction time $T$, parabolic rescalings will subconverge to a normalized nonflat  gradient shrinking Ricci soliton. In this section we use the construction of Section 2 and apply it to this result, in order to obtain a characterization of the nonflat gradient shrinking solitons in the language of our local invariant. We will need the following lemmas in order to prove our result for Type I singularities.

\begin{lem}\label{lem: scale invariance}
Let $(M^4,g)$ be a $4$-dimensional Riemannian manifold and $x\in M$, such that the branching curve $\Gamma_x$ exists. Then $\Gamma_x$ remains invariant under scalings of the metric by a constant factor.
\end{lem}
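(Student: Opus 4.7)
The plan is to show that each of the three projective quadrics $\mathbb{P}(v_x)$, $\mathbb{P}(\Lambda^2 g_x)$, $\mathbb{P}(\mbox{R}_x)$ and the ambient quadric $\mathbb{P}(g_x)$ is literally unchanged under the rescaling $g \mapsto \lambda g$ for any $\lambda > 0$, from which the invariance of the entire construction --- the complete intersection $S_x$, its resolution $\tilde{S}_x$, the branched double cover $\tilde{\tau}$, and hence the branching locus $\Gamma_x \subset \mathbb{P}(g_x)$ --- follows immediately. The content of the lemma is really just that the defining polynomials transform by an overall nonzero constant and therefore cut out the same projective varieties.

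The first step is to track how each of the four quadratic forms in Section 2.1 scales. The metric quadric $g_x$ obviously satisfies $g_x \mapsto \lambda g_x$, so the zero locus $\mathbb{P}(g_x)$ is unchanged. The volume density $\sqrt{|\det g|}$ scales by $\lambda^2$, so the explicit formula (\ref{eq: quadratic form for volume form}) gives $v_x \mapsto \lambda^2 v_x$. The form $\Lambda^2 g_x$ is built from $g \varowedge g$, hence $\Lambda^2 g_x \mapsto \lambda^2 \Lambda^2 g_x$. For the curvature, I would use that the Christoffel symbols depend only on $g^{-1} \otimes \partial g$ and are therefore scale-invariant; consequently the $(1,3)$-Riemann tensor is invariant, the $(4,0)$-tensor $R_{ijkl}$ scales by $\lambda$, and $\mbox{R}_x \mapsto \lambda \mbox{R}_x$. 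In each case the defining homogeneous polynomial is multiplied by a nonzero constant and the projectivized quadric is literally unchanged.

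Granted this, the complete intersection $S_x = \mathbb{P}(v_x) \cap \mathbb{P}(\Lambda^2 g_x) \cap \mathbb{P}(\mbox{R}_x)$ is unchanged. The Pl\"ucker embedding and the projections $\pi, \tau$ depend only on the linear structure of $T_x M \otimes \C$ and $\Lambda^2 T_x M \otimes \C$, not on $g$, so $\tilde{S}_x = \pi^{-1}(\mathbb{P}(\mbox{R}_x))$ is also unchanged and the double cover $\tilde{\tau}: \tilde{S}_x \to \mathbb{P}(g_x)$ is the same. In particular its branching locus $\Gamma_x$ is invariant.

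There is no serious obstacle beyond bookkeeping, but as a sanity check one can verify scale invariance directly on the defining equation of $\Gamma_x$ at the end of Section 2.2. In a $(\lambda g)$-orthonormal frame $\tilde{e}_i = e_i/\sqrt{\lambda}$, the dual basis of $\Lambda^2(T_x M \otimes \C)$ and the vectors $T_\pm$ (with the same coordinates $a^i, b^j$) satisfy $\tilde{T}_\pm = \lambda^{-1} T_\pm$, while the operators on $\Lambda^2$ scale as $\tilde{\mathfrak{W}}_\pm = \lambda^{-1} \mathfrak{W}_\pm$ and $\tilde{B} = \lambda^{-1} B$ (this follows from the earlier scaling of $\mbox{R}_x$ combined with $\Lambda^2 \tilde{g}_x = \lambda^2 \Lambda^2 g_x$). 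Substituting into the three bilinear expressions appearing in the discriminant shows each of them picks up a factor $\lambda^{-1}$; the squared term and the product therefore both pick up $\lambda^{-2}$, so the entire discriminant is multiplied by the nonzero constant $\lambda^{-2}$, leaving its zero locus $\Gamma_x$ unchanged.
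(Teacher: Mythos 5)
Your proof is correct. The paper's own proof is essentially your closing ``sanity check'': it substitutes $\Lambda^2\tilde g_x=\kappa^2\Lambda^2 g_x$ and $\widetilde{\cop}=\kappa^{-1}\cop$ directly into the defining equation of $\Gamma_x$ and observes that the squared term and the product both acquire the same overall factor $\kappa^{2}$, so the zero locus is unchanged. (The paper keeps $T_\pm$ fixed rather than passing to a $\kappa g$-orthonormal frame, which is why its overall constant is $\kappa^{2}$ where yours is $\lambda^{-2}$; since only the vanishing locus of a homogeneous expression matters, both conventions give the same conclusion.) What you add is the more structural observation that each of $\mathbb{P}(v_x)$, $\mathbb{P}(\Lambda^2 g_x)$, $\mathbb{P}(\mbox{R}_x)$ and $\mathbb{P}(g_x)$ is literally unchanged as a projective variety because its defining form is multiplied by a nonzero constant ($\lambda^2$, $\lambda^2$, $\lambda$, $\lambda$ respectively); this shows at once that not only $\Gamma_x$ but the whole local invariant $S_x$, its resolution $\tilde S_x$ and the cover $\tilde\tau$ are scale-invariant, which is slightly more than the lemma asks for and makes the invariance of the branching locus transparent without computation. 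Both routes rest on the same elementary scalings of $\det g$, $g\varowedge g$ and the $(4,0)$-curvature tensor, so there is no gap in either.
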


\begin{proof}
Let $\kappa$ be some constant factor and and let $\tilde{g}=\kappa g$. Then we know that $\Lambda^2 \tilde{g}=\kappa^2 \Lambda^2 g$, and $\widetilde{\cop}=\frac{1}{\kappa} \cop$.
Then the branching curve is given by
\begin{eqnarray*}
\Gamma_{x}^{\tilde{g}} &=& \{ ([a^1,a^2],[b^1,b^2]) \in \mathbb{P}(S^{-}_{x}) \times \mathbb{P}(S^{+}_{x}) : \, \Big(\kappa^2  \Lambda^2 g_x \Big(\frac{1}{\kappa}B (T_-),T_+ \Big) \Big)^2-\\
&&\kappa^2\Lambda^2 g_x\Big(\frac{1}{\kappa}\mathfrak{W}_+(T_+), T_+\Big)\kappa^2 \Lambda^2g_x\Big( \frac{1}{\kappa}\mathfrak{W}_-(T_-),T_-\Big)=0   \} \\
&=& \{ ([x^0,x^1],[y^0,y^1]) \in  \mathbb{P}(S^{-}_{x}) \times \mathbb{P}(S^{+}_{x}) : \,\kappa^2 \Big(  \Lambda^2 g_x(B (T_-),T_+) \Big)^2-\\
&&\kappa^2\Lambda^2 g_x\Big(\mathfrak{W}_+(T_+), T_+\Big) \Lambda^2g_x\Big(\mathfrak{W}_-(T_-),T_-\Big)=0   \} \\
&=& \Gamma_{x}^{g}.
\end{eqnarray*}
\end{proof}

\begin{lem} \label{lem: convergence curvature}
Let $\{ (M^n, g_i(t),x, F_i(t)) \}_{i \in \mathbb{N}}$, $t \in (\alpha, \omega) \ni 0$ be a sequence of smooth, complete, marked solutions to the Ricci flow, where the time-dependent frame $F_i(t)$ evolves to stay orthonormal. If the sequence  converges to a complete marked solution to the Ricci flow $(M_{\infty}, g_{\infty}(t),x_{\infty}, F_{\infty}(t))$, $t \in (\alpha, \omega) $ as $i \to \infty$, where $F_{\infty}(t)$ evolves to stay orthonormal, then the sequence $\{ (M, \emph{Rm}_{g_i(t)},x, F_i(t)) \}_{i \in \mathbb{N}}$, $t \in (\alpha, \omega) \ni 0$ converges to   $(M_{\infty}, \emph{Rm}_{g_\infty(t)},x_{\infty}, F_{\infty}(t)) $, $t \in (\alpha, \omega) $ as $i \to \infty$.
\end{lem}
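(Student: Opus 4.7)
The plan is to unpack the definition of Cheeger--Gromov--Hamilton convergence of marked Ricci flows and to observe that the Riemann curvature tensor depends universally on the two-jet of the metric, so smooth convergence of metrics forces smooth convergence of curvatures.

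First I would recall what the convergence hypothesis means concretely. There exist an exhaustion $U_1 \subset U_2 \subset \cdots$ of $M_\infty$ by open precompact neighborhoods of $x_\infty$ and smooth embeddings $\Phi_i : U_i \to M$ with $\Phi_i(x_\infty) = x$ such that the pulled-back time-dependent metrics satisfy $\Phi_i^* g_i(t) \to g_\infty(t)$ in $C^\infty_{\mathrm{loc}}$ on $U_i \times (\alpha,\omega)$, while the pulled-back frames $\Phi_i^* F_i(t)$ converge smoothly to $F_\infty(t)$. In particular, in any fixed chart on $M_\infty$, the components $(\Phi_i^* g_i(t))_{ab}$ converge to $(g_\infty(t))_{ab}$ together with all partial derivatives of all orders, uniformly on compact subsets of that chart in both space and time.

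Next I would invoke the fact that curvature is manufactured from the metric by a universal formula: in any local chart, the components $R_{abcd}$ are polynomial in $g_{ab}$, $g^{ab}$, $\partial_c g_{ab}$, and $\partial_c \partial_d g_{ab}$, and pullback commutes with taking curvature, i.e.\ $\Rm_{\Phi_i^* g_i(t)} = \Phi_i^*\Rm_{g_i(t)}$. Combining this with continuity of polynomial evaluation and of matrix inversion on the open cone of positive-definite symmetric matrices, the $C^\infty_{\mathrm{loc}}$-convergence of $\Phi_i^* g_i(t)$ to $g_\infty(t)$ transfers directly to $C^\infty_{\mathrm{loc}}$-convergence $\Phi_i^* \Rm_{g_i(t)} \to \Rm_{g_\infty(t)}$ on $M_\infty \times (\alpha,\omega)$.

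Finally, I would handle the frames by noting that $F_i(t)$ satisfies the ODE that preserves $g_i(t)$-orthonormality (the time derivative is given in terms of $\Ric_{g_i(t)}$ applied to the frame), and this ODE depends smoothly on the metric. Continuous dependence of ODE solutions on parameters, together with the already-established smooth convergence of the metrics and their curvatures, yields $C^\infty_{\mathrm{loc}}$-convergence $\Phi_i^* F_i(t) \to F_\infty(t)$. Contracting curvature against these frames is a smooth algebraic operation, so the frame components of $\Rm_{g_i(t)}$ converge to those of $\Rm_{g_\infty(t)}$, which is exactly the required convergence of $(M, \Rm_{g_i(t)}, x, F_i(t))$ to $(M_\infty, \Rm_{g_\infty(t)}, x_\infty, F_\infty(t))$. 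I do not anticipate a genuine obstacle here: the lemma is essentially an unwrapping of the definition of $C^\infty$-convergence via the universal polynomial formula for curvature, and the only bookkeeping is to make sure the frame ODE and the contractions behave well under the pullbacks, which they do because everything is smooth and the exhaustion $U_i$ gives uniformity on compact subsets.
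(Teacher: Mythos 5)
Your proposal is correct and follows essentially the same route as the paper: unpack Cheeger--Gromov--Hamilton convergence via an exhaustion and diffeomorphisms, then observe that the curvature components are universal (polynomial) expressions in the metric, its inverse, and its first and second derivatives, so $C^k$ convergence of the pulled-back metrics yields $C^{k-2}$ convergence of the pulled-back curvature tensors. Your extra paragraph on the frame ODE is harmless but unnecessary in the paper's setup, since there the diffeomorphisms are taken to satisfy $(\phi_i)_* F_\infty(t) = F_i(t)$ exactly, so no separate frame-convergence argument is needed.
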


\begin{proof}
Let $\{ U_i \}_{i \in \mathbb{N}}$ be an exhaustion of $M_{\infty}$ by open sets with $x_{\infty} \in U_i$ for all $i \in \mathbb{N}$. Furthermore let $\phi_i: U_i \to \phi_i(U_i) \subset M$ be a sequence of diffeomorphisms with $\phi_i(x_{\infty})=x$ and $(\phi_i)_* F_{\infty}(t)=F_i(t)$ for all $i \in \mathbb{N}$ and $t \in (\alpha, \omega)$. We know that $(U_i, \phi_{i}^{*}\Big[ g_i(t)|_{\phi(U_i)} \Big])$ converges in $C^{\infty}$ to $(M_{\infty},g_{\infty}(t))$ uniformly on compact sets in $M_{\infty}$. But uniform convergence of $\phi_{i}^{*}\Big[ g_i(t)|_{\phi(U_i)} \Big]$ to $g_{\infty}(t)$ in $C^k$ for any $k \geq 2$ implies immediately uniform convergence of $\phi_{i}^{*}\Big[ \mbox{Rm}_{g_i(t)}|_{\phi(U_i)} \Big]$ to $\mbox{Rm}_{g_\infty(t)}$ in $C^{k-2}$. This comes from the fact, that the components of the Riemann curvature tensor are determined by the second order (spatial) derivatives of the components of the Riemannian metric tensor. Thus one can deduce that $(U_i, \phi_{i}^{*}\Big[ \mbox{Rm}_{g_i(t)}|_{\phi(U_i)} \Big])$ converges in $C^{\infty}$ to $(M_{\infty},\mbox{Rm}_{g_{\infty}(t)})$ uniformly on compact sets in $M_{\infty}$.
\end{proof}

\begin{rem}
There is a reason behind the fact that we choose to work with an evolving orthonormal frame, which evolves to stay orthonormal. Our goal is to prove Theorem \ref{th: converge of curves}, which states, that convergence of metrics implies convergence of curves. This extra assumption guarantees us the desired extra control over the convergence of branching curves. 
\end{rem}

We are now in position to prove Theorem \ref{th: converge of curves}.

\begin{proof}[Proof of Theorem \ref{th: converge of curves}]
By the Lemma \ref{lem: convergence curvature} we know that the Cheeger-Gromov convergence can be extended to the case of Riemann curvature tensors as well. The coefficients of the branching curve are given by polynomials of components of $\Rm$. By the elemantary fact that a polynomial is a continuous function the result follows.
\end{proof}

We demonstrate now the proof of Corollary \ref{cor: ilias}.

\begin{proof}[Proof of Corollary \ref{cor: ilias}]
By the Lemma \ref{lem: scale invariance} the branching curves are invariant under scalings of the metric by a constant factor. Thus $\Gamma_{x}^{g_i(t)}=\Gamma_{x}^{g(T+\lambda_i t)}$. By the Compactness Theorem of Cheeger-Gromov-Hamilton, there exists a subsequence $\{j_i\}$ such that $(M^4,g_{j_i}(t),x,F_{j_i}(t))$ converges to a complete, pointed ancient solution to the Ricci flow $(M^{4}_{\infty},g_{\infty}(t),x_{\infty},F_{\infty}(t))$ on $(-\infty,0)$. By the result of Enders-M\"uller-Topping (\cite{EMT}, Theorem 1.4) this singularity model is given by a nontrivial normalized gradient shrinking Ricci soliton in canonical form. The result follows immediately from Theorem \ref{th: converge of curves}.
\end{proof}

\begin{rem}
We strongly believe, that by choosing the $K3$ surface as an invariant instead of the branching curve, we can obtain even better results. The reason is, that  the $K3$ surfaces approach is more a sophisticated tool and their moduli space is well understood. Recall, that the interested reader can find more details on the coarse moduli space for lattice polarized $K3$ surfaces in the Appendix of \cite{Tergiakidis}. This will be part of our forthcoming work. The hope is, that these invariants will provide us with a better understanding of the generic singularity models for Type I singularities for the four dimensional Ricci flow.
\end{rem} 

\begin{bibdiv}
\begin{biblist}

\bib{CCZ}{article}{
author={Cao, H.-D.},
author={Chen, B.-L.},
author={Zhu, X.-P.},
title={Recent developments on Hamilton's Ricci flow},
journal={Surv. Differ. Geom.},
Volume={XII},
pages={47-112}
date={2008},
}

\bib{EMT}{article}{
author={Enders, J.},
author={M\"uller, R.},
author={Topping, P.},
title={On type-I singularities in Ricci flow},
journal={Comm. Anal. Geom.},
Volume={19},
pages={906-922}
date={2011},
}

\bib{FIK}{article}{
author={Feldman, M.},
author={Ilmanen, T.},
author={Knopf, D.},
title={Rotationally symmetric shrinking and expanding gradient K\"ahler-Ricci solitons},
journal={J. Differential Geom.},
Volume={65},
pages={169-209}
date={2003},
}

\bib{Hamilton}{article}{
author={Hamilton, R.},
title={Three-manifolds with positive Ricci curvature},
journal={J. Diff. Geo.},
Volume={17},
pages={255-306}
date={1982},
}

\bib{Hamilton2}{article}{
author={Hamilton, R.},
title={The formation of singularities in the Ricci flow},
journal={Surveys in differential geometry (Cambridge, MA, 1993)},
Volume={2},
pages={7-136},
date={1995},
}

\bib{Harris}{book}{
author={Harris, J.},
title={Algebraic Geometry A First Course},
series={Graduate Texts in Mathematics},
Volume={133},
Publisher={Springer},
date={1992},
}

\bib{IKS}{article}{
author={Isenberg, J.},
author={Knopf, D.},
author={\v Se\v sum, N.},
title={Non-K\"ahler RIcci flow singularities that converge to K\"ahler-Ricci solitons},
eprinttype = {arxiv},
eprint={arXiv:math/1703.029.02918v2},
}

\bib{Maximo}{article}{
author={M\'aximo, D.},
title={On the blow-up of four dimensional Ricci flow singularities},
journal={J. Reine Mathematik},
Volume={692},
pages={153-171}
date={2014},
}

\bib{Naber}{article}{
author={Naber, A.},
title={Noncompact shrinking four solitons with nonnegative curvature},
journal={J. Reine Angew. Math},
Volume={645},
pages={125-153}
date={2010},
}

\bib{Nikulin}{article}{
author={Nikulin, V. V.},
title={Local invariants of 4-dimensional pseudo-Riemannian manifolds with Lorentz metric},
journal={Problems of geometry, VINITI},
Volume={17},
pages={187-130}
date={1985},
}

\bib{NW}{article}{
author={Ni, L.},
author={Wallach, N.}
title={On a classification of the gradient shrinking solitons},
journal={Math. Res. Lett.},
Volume={15},
pages={941-955}
date={2008},
}

\bib{NW1}{article}{
author={Ni, L.},
author={Wallach, N.}
title={On four-dimensional gradient shrinking solitons},
journal={Int. Math. Res. Not. IMRN},
Volume={2008},
pages={13 pp}
date={2008},
}

\bib{Per2}{article}{
author={Perelman, G.},
title={The entropy formula for the Ricci flow and its geometric applications},
eprint={arXiv:math.DG/0211159},
date={2003}
}

\bib{Sesum}{article}{
author={Sesum, N.},
title={Convergence of the Ricci flow toward a soliton},
journal={Com. Anal. Geom.},
Volume={14},
pages={283-343}
date={2006},
}

\bib{Tergiakidis}{thesis}{
author={Tergiakidis I.},
title={Local invariants of four-dimensional Riemannian manifolds and their application to the Ricci flow}
eprint={hdl.handle.net/11858/00-1735-0000-0023-3FB1-8}
}

\bib{Tyurin}{article}{
author={Tyurin, A. N.},
title={A local invariant of a Riemannian manifold},
journal={Izv. Akad. Nauk SSSR, Ser. Mat.}
Volume={45},
pages={824-851}
date={1981},
}

\end{biblist}
\end{bibdiv}

\end{document}